\newtheorem{thm}{Theorem} [section]
\newtheorem{prop}[thm]{Proposition}
\theoremstyle{definition}
\newtheorem{defn}[thm]{Definition}
\newtheorem{example}[thm]{Example}
\theoremstyle{remark}
\newtheorem{remark}[thm]{Remark}
\begin{document}

\numberwithin{equation}{section}

\newcommand{\hs}{\mbox{\hspace{.4em}}}
\newcommand{\ds}{\displaystyle}
\newcommand{\bd}{\begin{displaymath}}
\newcommand{\ed}{\end{displaymath}}
\newcommand{\bcd}{\begin{CD}}
\newcommand{\ecd}{\end{CD}}

\newcommand{\on}{\operatorname}
\newcommand{\proj}{\operatorname{Proj}}
\newcommand{\bproj}{\underline{\operatorname{Proj}}}

\newcommand{\spec}{\operatorname{Spec}}
\newcommand{\Spec}{\operatorname{Spec}}
\newcommand{\bspec}{\underline{\operatorname{Spec}}}
\newcommand{\pline}{{\mathbf P} ^1}
\newcommand{\aline}{{\mathbf A} ^1}
\newcommand{\pplane}{{\mathbf P}^2}
\newcommand{\aplane}{{\mathbf A}^2}
\newcommand{\coker}{{\operatorname{coker}}}
\newcommand{\ldb}{[[}
\newcommand{\rdb}{]]}

\newcommand{\Sym}{\operatorname{Sym}}
\newcommand{\Symp}{\operatorname{Sym}}
\newcommand{\Pic}{\bf{Pic}}
\newcommand{\Aut}{\operatorname{Aut}}
\newcommand{\PAut}{\operatorname{PAut}}

\newcommand{\too}{\twoheadrightarrow}
\newcommand{\C}{{\mathbf C}}
\newcommand{\Z}{{\mathbf Z}}
\newcommand{\Q}{{\mathbf Q}}
\newcommand{\Cx}{{\mathbf C}^{\times}}
\newcommand{\Cbar}{\overline{\C}}
\newcommand{\Cxbar}{\overline{\Cx}}
\newcommand{\cA}{{\mathcal A}}
\newcommand{\cS}{{\mathcal S}}
\newcommand{\cV}{{\mathcal V}}
\newcommand{\cM}{{\mathcal M}}
\newcommand{\bA}{{\mathbf A}}
\newcommand{\cB}{{\mathcal B}}
\newcommand{\cC}{{\mathcal C}}
\newcommand{\cD}{{\mathcal D}}
\newcommand{\D}{{\mathcal D}}
\newcommand{\cs}{{\mathbf C} ^*}
\newcommand{\boldc}{{\mathbf C}}
\newcommand{\cE}{{\mathcal E}}
\newcommand{\cF}{{\mathcal F}}
\newcommand{\bF}{{\mathbf F}}
\newcommand{\cG}{{\mathcal G}}
\newcommand{\G}{{\mathbb G}}
\newcommand{\cH}{{\mathcal H}}
\newcommand{\CI}{{\mathcal I}}
\newcommand{\cJ}{{\mathcal J}}
\newcommand{\cK}{{\mathcal K}}
\newcommand{\cL}{{\mathcal L}}
\newcommand{\baL}{{\overline{\mathcal L}}}

\newcommand{\Mf}{{\mathfrak M}}
\newcommand{\bM}{{\mathbf M}}
\newcommand{\bm}{{\mathbf m}}
\newcommand{\cN}{{\mathcal N}}
\newcommand{\theo}{\mathcal{O}}
\newcommand{\cP}{{\mathcal P}}
\newcommand{\cR}{{\mathcal R}}
\newcommand{\Pp}{{\mathbb P}}
\newcommand{\boldp}{{\mathbf P}}
\newcommand{\boldq}{{\mathbf Q}}
\newcommand{\bbL}{{\mathbf L}}
\newcommand{\cQ}{{\mathcal Q}}
\newcommand{\cO}{{\mathcal O}}
\newcommand{\Oo}{{\mathcal O}}
\newcommand{\cY}{{\mathcal Y}}
\newcommand{\OX}{{\Oo_X}}
\newcommand{\OY}{{\Oo_Y}}
\newcommand{\otY}{{\underset{\OY}{\ot}}}
\newcommand{\otX}{{\underset{\OX}{\ot}}}
\newcommand{\cU}{{\mathcal U}}\newcommand{\cX}{{\mathcal X}}
\newcommand{\cW}{{\mathcal W}}
\newcommand{\boldz}{{\mathbf Z}}
\newcommand{\qgr}{\operatorname{q-gr}}
\newcommand{\gr}{\operatorname{gr}}
\newcommand{\rk}{\operatorname{rk}}
\newcommand{\SH}{{\underline{\operatorname{Sh}}}}
\newcommand{\End}{\operatorname{End}}
\newcommand{\uEnd}{\underline{\operatorname{End}}}
\newcommand{\Hom}{\operatorname{Hom}}
\newcommand{\uHom}{\underline{\operatorname{Hom}}}
\newcommand{\uHomY}{\uHom_{\OY}}
\newcommand{\uHomX}{\uHom_{\OX}}
\newcommand{\Ext}{\operatorname{Ext}}
\newcommand{\bExt}{\operatorname{\bf{Ext}}}
\newcommand{\Tor}{\operatorname{Tor}}

\newcommand{\inv}{^{-1}}
\newcommand{\airtilde}{\widetilde{\hspace{.5em}}}
\newcommand{\airhat}{\widehat{\hspace{.5em}}}
\newcommand{\nt}{^{\circ}}
\newcommand{\del}{\partial}

\newcommand{\supp}{\operatorname{supp}}
\newcommand{\GK}{\operatorname{GK-dim}}
\newcommand{\hd}{\operatorname{hd}}
\newcommand{\id}{\operatorname{id}}
\newcommand{\res}{\operatorname{res}}
\newcommand{\lrar}{\leadsto}
\newcommand{\im}{\operatorname{Im}}
\newcommand{\HH}{\operatorname{H}}
\newcommand{\TF}{\operatorname{TF}}
\newcommand{\Bun}{\operatorname{Bun}}

\newcommand{\F}{\mathcal{F}}
\newcommand{\Ff}{\mathbb{F}}
\newcommand{\nthord}{^{(n)}}
\newcommand{\Gr}{{\mathfrak{Gr}}}

\newcommand{\Fr}{\operatorname{Fr}}
\newcommand{\GL}{\operatorname{GL}}
\newcommand{\gl}{\mathfrak{gl}}
\newcommand{\SL}{\operatorname{SL}}
\newcommand{\ff}{\footnote}
\newcommand{\ot}{\otimes}
\def\Ext{\operatorname {Ext}}
\def\Hom{\operatorname {Hom}}
\def\Ind{\operatorname {Ind}}
\def\bbZ{{\mathbb Z}}

\newcommand{\nc}{\newcommand}
\nc{\ol}{\overline} \nc{\cont}{\on{cont}} \nc{\rmod}{\on{mod}}
\nc{\Mtil}{\widetilde{M}} \nc{\wb}{\overline} 
\nc{\wh}{\widehat} \nc{\sm}{\setminus} \nc{\mc}{\mathcal}
\nc{\mbb}{\mathbb}  \nc{\K}{{\mc K}} \nc{\Kx}{{\mc K}^{\times}}
\nc{\Ox}{{\mc O}^{\times}} \nc{\unit}{{\bf \on{unit}}}
\nc{\boxt}{\boxtimes} \nc{\xarr}{\stackrel{\rightarrow}{x}}

\nc{\Ga}{\G_a}
 \nc{\PGL}{{\on{PGL}}}
 \nc{\PU}{{\on{PU}}}

\nc{\h}{{\mathfrak h}} \nc{\kk}{{\mathfrak k}}
 \nc{\Gm}{\G_m}
\nc{\Gabar}{\wb{\G}_a} \nc{\Gmbar}{\wb{\G}_m} \nc{\Gv}{G^\vee}
\nc{\Tv}{T^\vee} \nc{\Bv}{B^\vee} \nc{\g}{{\mathfrak g}}
\nc{\gv}{{\mathfrak g}^\vee} \nc{\BRGv}{\on{Rep}\Gv}
\nc{\BRTv}{\on{Rep}T^\vee}
 \nc{\Flv}{{\mathcal B}^\vee}
 \nc{\TFlv}{T^*\Flv}
 \nc{\Fl}{{\mathfrak Fl}}
\nc{\BRR}{{\mathcal R}} \nc{\Nv}{{\mathcal{N}}^\vee}
\nc{\St}{{\mathcal St}} \nc{\ST}{{\underline{\mathcal St}}}
\nc{\Hec}{{\bf{\mathcal H}}} \nc{\Hecblock}{{\bf{\mathcal
H_{\alpha,\beta}}}} \nc{\dualHec}{{\bf{\mathcal H^\vee}}}
\nc{\dualHecblock}{{\bf{\mathcal H^\vee_{\alpha,\beta}}}}
\newcommand{\ramBun}{{\bf{Bun}}}
\newcommand{\ramBuno}{\ramBun^{\circ}}

\nc{\Buntheta}{{\bf Bun}_{\theta}} \nc{\Bunthetao}{{\bf
Bun}_{\theta}^{\circ}} \nc{\BunGR}{{\bf Bun}_{G_\BR}}
\nc{\BunGRo}{{\bf Bun}_{G_\BR}^{\circ}}
\nc{\HC}{{\mathcal{HC}}}
\nc{\risom}{\stackrel{\sim}{\to}} \nc{\Hv}{{H^\vee}}
\nc{\bS}{{\mathbf S}}
\def\BRep{\operatorname {Rep}}
\def\Conn{\operatorname {Conn}}

\nc{\Vect}{{\operatorname{Vect}}}
\nc{\Hecke}{{\operatorname{Hecke}}}

\newcommand{\ZZ}{{Z_{\bullet}}}
\nc{\HZ}{{\mc H}\ZZ} \nc{\eps}{\epsilon}

\nc{\CN}{\mathcal N} \nc{\BA}{\mathbb A}

\nc{\ul}{\underline}

\nc{\bn}{\mathbf n} \nc{\Sets}{{\on{Sets}}} \nc{\Top}{{\on{Top}}}
\nc{\IntHom}{{\mathcal Hom}}

\nc{\Simp}{{\mathbf \Delta}} \nc{\Simpop}{{\mathbf\Delta^\circ}}

\nc{\Cyc}{{\mathbf \Lambda}} \nc{\Cycop}{{\mathbf\Lambda^\circ}}

\nc{\Mon}{{\mathbf \Lambda^{mon}}}
\nc{\Monop}{{(\mathbf\Lambda^{mon})\circ}}

\nc{\Aff}{{\on{Aff}}} \nc{\Sch}{{\on{Sch}}}

\nc{\bul}{\bullet}
\nc{\module}{{\operatorname{-mod}}}

\nc{\dstack}{{\mathcal D}}

\nc{\BL}{{\mathbb L}}

\nc{\BD}{{\mathbb D}}

\nc{\BR}{{\mathbb R}}

\nc{\BT}{{\mathbb T}}

\nc{\SCA}{{\mc{SCA}}}
\nc{\DGA}{{\mc DGA}}

\nc{\DSt}{{DSt}}

\nc{\lotimes}{{\otimes}^{\mathbf L}}

\nc{\bs}{\backslash}

\nc{\Lhat}{\widehat{\mc L}}

\newcommand{\Coh}{\on{Coh}}

\nc{\QCoh}{QC}
\nc{\QC}{QC}
\nc{\Perf}{\on{Perf}}
\nc{\Cat}{{\on{Cat}}}
\nc{\dgCat}{{\on{dgCat}}}
\nc{\bLa}{{\mathbf \Lambda}}

\nc{\BRHom}{\mathbf{R}\hspace{-0.15em}\on{Hom}}
\nc{\BREnd}{\mathbf{R}\hspace{-0.15em}\on{End}}
\nc{\colim}{\on{colim}}
\nc{\oo}{\infty}
\nc{\Mod}{\on{Mod} }

\nc\fh{\mathfrak h}
\nc\al{\alpha}
\nc\la{\alpha}
\nc\BGB{B\bs G/B}
\nc\QCb{QC^\flat}
\nc\qc{\on{QC}}

\def\w{\wedge}
\nc{\vareps}{\varepsilon}

\nc{\fg}{\mathfrak g}

\nc{\Map}{\on{Map}} \nc{\fX}{\mathfrak X}

\nc{\ch}{\check}
\nc{\fb}{\mathfrak b} \nc{\fu}{\mathfrak u} \nc{\st}{{st}}
\nc{\fU}{\mathfrak U}
\nc{\fZ}{\mathfrak Z}

 \nc\fc{\mathfrak c}
 \nc\fs{\mathfrak s}

\nc\fk{\mathfrak k} \nc\fp{\mathfrak p}

\nc{\BRP}{\mathbf{RP}} \nc{\rigid}{\text{rigid}}
\nc{\glob}{\text{glob}}

\nc{\cI}{\mathcal I}

\nc{\La}{\mathcal L}

\nc{\quot}{/\hspace{-.25em}/}

\nc\aff{\it{aff}}
\nc\BS{\mathbb S}

\nc\Loc{{\mc Loc}}
\nc\Tr{{\on{Tr}}}
\nc\Ch{{\mc Ch}}

\nc\ftr{{\mathfrak {tr}}}
\nc\fM{\mathfrak M}

\nc\Id{\operatorname{Id}}

\nc\bimod{\on{-bimod}}

\nc\ev{\operatorname{ev}}
\nc\coev{\operatorname{coev}}

\nc\pair{\operatorname{pair}}
\nc\kernel{\operatorname{kernel}}

\nc\Alg{\operatorname{Alg}}

\nc\init{\emptyset_{\text{\em init}}}
\nc\term{\emptyset_{\text{\em term}}}

\nc\Ev{\on{Ev}}
\nc\Coev{\on{Coev}}

\nc\es{\emptyset}
\nc\m{\text{\it min}}
\nc\M{\text{\it max}}
\nc\cross{\text{\it cr}}
\nc\tr{\on{tr}}

\nc\perf{\on{-perf}}
\nc\inthom{\mathcal Hom}
\nc\intend{\mathcal End}

\newcommand{\Sh}{\on{Sh}}

\nc{\Comod}{\on{Comod}}
\nc{\cZ}{\mathcal Z}

\def\interiorsymbol {\on{int}}

\nc\frakf{\mathfrak f}
\nc\fraki{\mathfrak i}
\nc\frakj{\mathfrak j}
\nc\bP{\mathbb P}
\nc\stab{st}
\nc\Stab{St}

\nc\fN{\mathfrak N}
\nc\fT{\mathfrak T}
\nc\fV{\mathfrak V}

\nc\Ob{\on{Ob}}

\nc\fC{\mathfrak C}
\nc\Fun{\on{Fun}}

\nc\Null{\on{Null}}

\nc\BC{\mathbb C}

\nc\loc{\on{Loc}}

\nc\hra{\hookrightarrow}
\nc\fL{\mathfrak L}
\nc\R{\mathbb R}
\nc\CE{\mathcal E}

\nc\sK{\mathsf K}
\nc\sC{\mathsf C}

\nc\Cone{\mathit Cone}

\nc\fY{\mathfrak Y}
\nc\fe{\mathfrak e}
\nc\ft{\mathfrak t}

\nc\wt{\widetilde}
\nc\inj{\mathit{inj}}
\nc\surj{\mathit{surj}}

\nc\Path{\mathit{Path}}
\nc\Set{\mathit{Set}}
\nc\Fin{\mathit{Fin}}

\nc\cyc{\mathit{cyc}}

\nc\per{\mathit{per}}

\nc\sym{\mathit{symp}}
\nc\con{\mathit{cont}}
\nc\gen{\mathit{gen}}
\nc\str{\mathit{str}}
\nc\rsdl{\mathit{res}}
\nc\rel{\mathit{rel}}
\nc\pt{\mathit{pt}}
\nc\naive{\mathit{nv}}
\nc\forget{\mathit{For}}
\nc\periodic{\mathit{Per}}

\nc\sW{\mathsf W}
\nc\sE{\mathsf E}
\nc\sP{\mathsf P}
\nc\sB{\mathsf B}
\nc\sS{\mathsf S}
\nc\fH{\mathfrak H}
\nc\fP{\mathfrak P}
\nc\fW{\mathfrak W}
\nc\fE{\mathfrak E}
\nc\sx{\mathsf x}
\nc\sy{\mathsf y}

\nc\ord{\mathit{ord}}

\title[Cyclic symmetries of  $A_n$-quiver representations]{Cyclic symmetries  of $A_n$-quiver representations}

\author{David Nadler}
\address{Department of Mathematics\\University of California, Berkeley\\Berkeley, CA  94720-3840}
\email{nadler@math.berkeley.edu}

%

\maketitle


 \tableofcontents


\section{Introduction}



This short note contains a combinatorial construction of symmetries  arising in several distinct but related areas. 
In symplectic geometry, it is connected to categorical quantizations in the form of microlocal sheaves~\cite{KS} or partially wrapped or infinitesimal Fukaya categories~\cite{Seidel, AurouxICM},  in particular of Riemann surfaces in the form of ribbon graphs~\cite{harer, penner, kont1, kont2} as  explored in~\cite{STZ}. In algebraic geometry, via mirror symmetry, it is connected to derived categories of singularities~\cite{eis, Orlov}, in particular  of functions on curves and more general Brieskorn singularities
via duality and the Thom-Sebastiani theorem~\cite{KR, dyck, preygel}. In $K$-theory, it is connected to Waldhausen's S-construction~\cite{Waldhausen},
specifically with its corepresentability and $S^1$-equivariance~\cite{DK}.\footnote{I  am grateful to J.~Lurie for discussions about how this note  contributes to an understanding of the $S^1$-equivariance of the S-construction as appears in the comprehensive work of Dyckerhoff-Kapranov (see the discussion of~\cite[Introduction, p. 9]{DK}).}
Our primary motivation (in the spirit of~\cite{kont}, and to be taken up in general elsewhere~\cite{deform}) lies in the first direction, with the aim of  constructing a combinatorial  quantization of Lagrangian skeleta, equivalent to microlocal sheaves in their many guises. We explain below the one dimensional case of ribbon graphs where the main result  of this note gives an immediate solution.  
 
 To state the main result of this note (in its simplest two-periodic form), we briefly recall Connes' cyclic category~\cite{Connes, DHK, Loday}. Let $\ul \Lambda$ denote the big cyclic category of  finite cyclically ordered   nonempty sets.
Objects are finite nonempty subsets of the circle $S\subset S^1$, and morphisms $S\to S'$ are homotopy classes of degree $1$ maps $\varphi: S^1\to S^1$ such that $\varphi(S) \subset S'$.
The traditional cyclic category $\Lambda$ is the full subcategory  of $ \ul\Lambda$ on the objects $[n] = \Z/(n+1)\Z \subset S^1$ embedded as $(n+1)$st roots of unity,
for $n=0, 1, 2, \ldots$.  The inclusion $\Lambda \subset \ul\Lambda$ is  an equivalence.

For each finite cyclically ordered   nonempty set  $S$, we introduce a triangulated two-periodic $A_\oo$-category $\cC_{S, st}$. (It is the twisted complexes in a particularly simple  two-periodic  $A_\oo$-category  $\cC_{S}$ with objects given by the set $S$.) It provides a symmetric presentation of the  two-periodic differential graded derived category of finite-dimensional representations of the $A_{n-1}$-quiver
with $n=|S|$. For example, for $n=0$, it is the zero category, but presented as one object $s_0$ with a degree one map
$$
\xymatrix{
s_0 \ar[r] & s_0
}
$$ 
whose differential is the (two-periodic) identity.
For $n=1$, it is generated by two objects $s_0, s_1$ with degree one maps
$$
\xymatrix{
s_0 \ar[r] & s_1 &  s_1\ar[r] &  s_0
}
$$ whose compositions are the respective (two-periodic) identities. For $n=2$, it provides a symmetric presentation of a universal exact triangle (as explained in~\cite{Seidel}, and attributed to Kontsevich).  For general $n$,  it provides a symmetric presentation of $n$-step filtrations,
which
 allows for an easy verification of  the following. (From a concrete perspective, the primary content is in the precise
 combinatorial form of the construction rather than the abstract statement itself.)

\begin{thm}[Theorem~\ref{main thm} below]\label{intro main thm}
The assignment of the two-periodic $A_\oo$-category $\cC_{S,st}$ to a finite cyclically ordered nonempty set $S$ naturally extends to a functor 
from the opposite of the cyclic category to the strict category of two-periodic $A_\oo$-categories
$$
\xymatrix{
\cC_{st}:\ul \Lambda^{op}\ar[r] &  \cA_\per^\str
}
$$
\end{thm}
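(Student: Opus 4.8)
The plan is to construct the functor directly on the big cyclic category $\ul\Lambda$, using the geometric description of its morphisms as degree-one self-maps of $S^1$, and to push all of the work down to the small two-periodic $A_\oo$-categories $\cC_S$. The first move is a reduction. Since $\cC_{S,st}=\on{Tw}(\cC_S)$, and the formation of twisted complexes is functorial and comes with a totalization map $\on{Tw}(\on{Tw}(\cC_S))\to\on{Tw}(\cC_S)$, it is enough to assign to each morphism $\varphi\colon S\to S'$ of $\ul\Lambda$ a strict $A_\oo$-functor $\cC_\varphi\colon\cC_{S'}\to\cC_{S,st}$; then $\on{Tw}(\cC_\varphi)$ followed by totalization yields a strict $A_\oo$-functor $\cC_{S',st}\to\cC_{S,st}$, which is what we assign to $\varphi$ in $\cA_\per^{\str}$ (recall a morphism $\varphi\colon S\to S'$ of $\ul\Lambda$ is a morphism $S'\to S$ of $\ul\Lambda^{op}$). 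It then remains to check, on the nose, that $\cC_{\id_S}=\id$ and that $\cC_{\psi\varphi}=\cC_\varphi\circ\cC_\psi$ for composable $\varphi,\psi$ in $\ul\Lambda$. Two-periodicity is automatic, as the whole construction stays inside $\cA_\per$.

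For the construction itself, fix a representative $\varphi\colon S^1\to S^1$ of the homotopy class, of degree one with $\varphi(S)\subseteq S'$; one may and does take $\varphi$ to be locally constant near $S$ and monotone on each arc of $S^1\smallsetminus S$, so that $\varphi$ descends to a monotone cyclic map of the arcs of $S^1\smallsetminus S$ to the arcs of $S^1\smallsetminus S'$. For a point $s'\in S'$, let $\ell_{s'}$ be the closed arc immediately counterclockwise of $s'$; its preimage $\varphi^{-1}(\ell_{s'})$ is a union of arcs of $S^1\smallsetminus S$, linearly ordered by the cyclic order starting from a basepoint determined by $\varphi$ (for degree reasons this is essentially a consecutive run, up to possible full turns around $S^1$). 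The corresponding chain of degree-one structure maps of $\cC_S$ assembles into a canonical twisted complex $T_{s'}\in\cC_{S,st}$, an iterated cone with that ordering; this basepoint/ordering is the one real combinatorial choice. Set $\cC_\varphi(s')=T_{s'}$ on objects. On the generating degree-one morphism $s'\to s''$ of $\cC_{S'}$, with $s''$ the cyclic successor of $s'$, put the closed degree-one morphism $T_{s'}\to T_{s''}$ of twisted complexes induced by the single structure map of $\cC_S$ joining the last arc of $\varphi^{-1}(\ell_{s'})$ to the first arc of $\varphi^{-1}(\ell_{s''})$, placed in the appropriate matrix entry; declare all higher $A_\oo$-components of $\cC_\varphi$ to vanish.

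What remains is verification, in three parts. First, $\cC_\varphi$ is a well-defined $A_\oo$-functor: because $\cC_S$ carries no nontrivial higher products (and hence neither does $\cC_{S,st}=\on{Tw}(\cC_S)$ beyond its twisted differential and composition), the $A_\oo$-functor equations collapse to the statement that $\cC_\varphi^{(1)}$ be a strict differential-graded functor — compatible with the twisted differentials (the images of generators are closed by construction of the $T_{s'}$) and with composition — together with the condition that $\cC_\varphi$ respect the one global relation of $\cC_{S'}$, that a full cycle of structure maps is a shift of the identity. This last point holds precisely because $\varphi$ has degree one: running once around the arcs of $S'$ pulls back to running once around the arcs of $S$, so the corresponding composite in $\cC_{S,st}$ is a shift of the identity of $T_{s'}$. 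Second, $\cC_\varphi$ depends only on the homotopy class of $\varphi$, since the only input is the induced monotone cyclic map on arcs together with its preimage data, a homotopy invariant. Third, strict functoriality: for $S\xrightarrow{\varphi}S'\xrightarrow{\psi}S''$ one has $\varphi^{-1}(\psi^{-1}(\ell))=(\psi\varphi)^{-1}(\ell)$ as ordered unions of arcs, and one must check that forming $\cC_\psi(s'')$ over $S'$ and then substituting $T_{s'}=\cC_\varphi(s')$ for each object $s'$ occurring in it yields, matrix entry by matrix entry, exactly the twisted complex $\cC_{\psi\varphi}(s'')$ built in one stage over $S$; likewise $\cC_{\id}=\id$ is immediate.

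I expect this last point — making the twisted complexes and their structure maps compose \emph{strictly}, not merely up to $A_\oo$-isomorphism, as the target $\cA_\per^{\str}$ demands — to be the main obstacle, and it is what forces the canonical basepoint/ordering in the definition of $T_{s'}$; the rest is either homotopy-theoretic bookkeeping or degenerate because $\cC_S$ is so simple. As a sanity check one can match the output against the cases $|S|=1,2,3$ recalled above, and observe that restriction along the equivalence $\Lambda\hra\ul\Lambda$ recovers the same functor on the classical cyclic category.
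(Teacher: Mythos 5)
Your packaging---define $f^*\colon \cC_{S'}\to\cC_{S,st}$ directly for every cyclic map at once and then verify strict composition by hand---is close in spirit to the paper, which instead factors every cyclic map as a surjection followed by an injection, proves functoriality separately for the surjective and injective subcategories, and reduces the exchange law to a few explicit cases with $|S|=|T|-1=|U|$. But as written your construction has two genuine errors before the (admittedly hard, and here unverified) strict-composition step is even reached. First, the object assignment is wrong. Taking the arcs of $S^1\smallsetminus S$ lying in $\varphi^{-1}(\ell_{s'})$ for the \emph{closed} arc $\ell_{s'}$ from $s'$ to its cyclic successor already fails for $\varphi=\id$: the preimage contains exactly one arc, so $T_{s'}$ is the cone on the generator $v\colon s'\to\sigma(s')$ rather than $s'$ itself, and $\cC_{\id}\neq\id$. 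The correct recipe is the one the paper uses: $f^*(s')$ is the twisted complex on the \emph{fiber} $\varphi^{-1}(s')\cap S$ with its collapsed connecting arcs, and when the fiber is empty $s'$ must be sent to a zero object. Your recipe never produces a zero object and you never discuss points outside $\varphi(S)$; this is precisely why the paper adjoins a zero object (the categories $\cC_{S,0}$) and treats injections by a separate proposition.

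Second, the verification step rests on a false premise: $\cC_S$ does have a nontrivial higher product, namely $\mu_{n+1}(v_{i+n},\ldots,v_i)=u\,\id_{s_i}$, which is the whole point of the construction, so the $A_\oo$-functor equations do not collapse to dg-functoriality plus ``one global relation.'' Worse, your blanket declaration that all higher components of $\cC_\varphi$ vanish cannot hold once $\varphi$ is not surjective. Take $S=\{s_0\}\hookrightarrow S'=\{s_0,s_1\}$: both generators $v_0,v_1$ of $\cC_{S'}$ must map to morphisms factoring through the zero object, hence to $0$, and the functor equation on the full cycle forces a nonzero second component $f^*_2(v_1,v_0)$ with $\mu_1\bigl(f^*_2(v_1,v_0)\bigr)=u\,\id_{s_0}$---exactly the paper's ``$f$-interval'' formula $f^*_k(v_{k-1},\ldots,v_0)=w_0$ in the injective functoriality proposition. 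So on the non-surjective part of $\ul\Lambda$ your functors must carry higher components, and the check that they satisfy the $A_\oo$-equations (and compose strictly with the surjective part) is the real content; your proposal as it stands does not supply it. The surjective half of your construction (fiberwise iterated cones, with the generator sent to projection-to-last-term, connecting generator, inclusion-of-first-term) is essentially the paper's, and with the object assignment corrected and the higher components added on injections, your direct composition check would amount to redoing the paper's case analysis.
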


\begin{remark}
We also describe a graded version in Theorem~\ref{main thm gr} where the target is the category $\cA_\oo$ of plain (no longer two-periodic) $A_\oo$-categories. Instead of the cyclic category, the domain becomes a cover of the cyclic category with 
objects comprising pairs of a finite nonempty subset $S\subset S^1$ and a point $c\in \Sym^2(S^1)$. 
\end{remark}

\begin{remark}
Thanks to the natural duality equivalence $\ul\Lambda \simeq \ul \Lambda^{op}$, the theorem also provides an alternatively variant functor.
\end{remark}

Because of its basic nature as a universal sequence of composable morphisms, the $A_n$-quiver appears wherever categories appear. Similarly,  its derived category of representations appears wherever triangulated categories appear. What is less immediately evident in some contexts (though certainly of primary focus in others) is the importance of the natural  functors (as appear in Theorem~\ref{intro main thm}) between representations of the $A_n$-quivers  for varying $n$. Let us informally mention three settings where they play a significant role.

\subsection{Symplectic geometry}
 Representations of the $A_{n-1}$-quiver arise in symplectic geometry as the infinitesimal (or dual partially wrapped) Fukaya category of the exact symplectic manifold $M = \BC$ with support Lagrangian $\Lambda_n\subset \BC$ the union of $n$  rays emanating from the origin. 
The functoriality of Theorem~\ref{intro main thm} provides the gluing one needs to extend this description to 
 the general
 case of oriented surfaces as captured by ribbon graphs~\cite{harer, penner, kont1, kont2} as explored in~\cite{STZ}. 

To spell this out, by a ribbon graph we will mean a locally finite graph $\Gamma$ with an embedding into the germ of an oriented surface $\Sigma$. We can allow  $\Gamma$ to have half-infinite edges incident to a single vertex, as well as infinite or circular edges incident to no vertices. We can also allow $\Gamma$ to have vertices of any finite nonzero valency.

There is a natural cosheaf of sets over $\Gamma$ given by the local components of the complement $\Sigma\setminus \Gamma$.
Moreover, the cosheaf naturally takes values in the cyclic category $\ul \Lambda$ thanks to the orientation of $\Sigma$. Composing with the functor of Theorem~\ref{intro main thm}, we obtain a sheaf of $A_\oo$-categories  (or alternatively, 
via  the natural  equivalence $\ul\Lambda \simeq \ul \Lambda^{op}$,  a cosheaf of $A_\oo$-categories) over $\Gamma$.
Taking its global sections in the form of a homotopy limit (or alternatively, homotopy colimit) over $\Gamma$, we obtain a combinatorial model for the two-periodic
infinitesimal (or alternatively, partially wrapped) Fukaya category of  $\Sigma$. (Note that one could go further and add a bicanonical trivialization and then apply the graded functor of Theorem~\ref{main thm gr}.)
This is the  first instance 
of general constructions to be further developed in~\cite{deform}. 

%

%

\subsection{Algebraic geometry} 
Under mirror symmetry, the above symplectic realization of representations of the $A_{n-1}$-quiver arises in algebraic geometry as
$\Gm$-equivariant matrix factorizations for the singularity $x^{n} = 0$. Here 
the functoriality of Theorem~\ref{intro main thm} is less evident and could be profitably organized in terms of integral transforms~\cite{KR, dyck, preygel}
living on Brieskorn singularities. (In informal discussions, J. Lurie has also explained a  simple formulation of it
in a flexible homotopical setting).

\subsection{$K$-theory} Given a suitable stable category, Waldhausen's S-construction records the structure of filtered objects and their  relationships under restriction and induction of filtrations. By keeping track of the natural symmetries of such operations, 
 it produces a paracyclic space, and in particular,  from a two-periodic category, a cyclic space~\cite{DK}.
 Within the $A_\oo$-setting, one can interpret a graded version  of Theorem~\ref{intro main thm} as the calculation of the object corepresenting the S-construction.
In fact, the graded version we present in Theorem~\ref{main thm gr} records slightly more symmetry than the S-construction:
it only depends on a point $c\in \Sym^2(S^1)$ rather than a single point $c\in S^1$ as would appear in a paracyclic realization.

\subsection{Acknowledgements}
It is a pleasure to thank Jacob Lurie for his interest in the construction recorded here and stimulating discussions of generalizations to an $\oo$-categorical setting. I am  grateful to David Ben-Zvi for  many beautiful explanations of related topics,
and to Toby Dyckerhoff for generous explanations of his joint work with Mikhail Kapranov. I am also grateful to Denis Auroux, David Treumann, and Eric Zaslow for their feedback on the broader undertaking of which this is a part.

I gratefully acknowledge the support of NSF grant DMS-1319287.


\section{Two-periodic $A_\oo$-categories}

Fix a base field $k$. Introduce the commutative  graded $k$-algebra $k[u, u^{-1}]$ where $\deg u = 2$.
All of our  constructions will be {two-periodic} in the sense that they will be $k[u, u^{-1}]$-linear.

\begin{remark}
Our constructions make sense with $k$ replaced by the integers or  any base commutative ring. Moreover, J.~Lurie has sketched an $\oo$-categorical version that takes place over the sphere spectrum.
\end{remark}

We will follow the conventions on $A_\oo$-categories from Seidel's book~\cite{Seidel}.
By a two-periodic $A_\oo$-category $\cC$, we will mean the following data:
\begin{enumerate}
\item A set of objects $\Ob \cC$.
\item For pairs of objects $c_0, c_1\in \Ob \cC$,  a compatibly graded $k[u, u^{-1}]$-module $\hom_\cC(c_0, c_1)$.
\item For every $d\geq 1$, and objects $c_0, \ldots, c_d\in \Ob \cC$, a $k[u, u^{-1}]$-linear composition map
$$
\xymatrix{
\mu_d:\hom_\cC(c_{d-1}, c_d) \otimes \cdots \otimes \hom_\cC(c_{0}, c_1) \ar[r] & \hom_\cC(c_0, c_d)[2-d]
}
$$
The composition maps must satisfy the quadratic $A_\oo$-equations
$$
\xymatrix{
\displaystyle
\sum_{m = 0}^d \sum_{n =0}^{d-m} (-1)^{\dag_n} \mu_{d-m+1} (a_d, \ldots, a_{n+m+1}, \mu_m(a_{n+m}, \ldots, a_{n+1}), a_n, \ldots, a_1) = 0
}
$$
where the sign is determined by $\dag_n = |a_1| + \cdots + |a_n| - n$.
\end{enumerate}

\begin{remark}
We could equivalently assume our morphisms form $\Z/2\Z$-graded $k$-modules. Then the composition  map $\mu_d$ would be a morphism of degree $2-d = d$ modulo $2$.
\end{remark}

\begin{remark}
All of our $A_\oo$-categories will be strictly unital: for each object $c \in \Ob \cC$, there is a degree zero element $\id_c\in \hom^0_\cC(c, c)$,
called the identity morphism, such that
\begin{enumerate}
\item for  any $a\in \hom_\cC(c_0, c_1)$, we have
$$
(-1)^{|a|}\mu_2(\id_{c_1}, a) = a = \mu_2(a, \id_{c_0}).
$$
 \item for  $d\not =2$, the composition map $\mu_d$ vanishes if any entry is an identity morphism.
\end{enumerate}
\end{remark}

\begin{defn}\label{defn per cat}
Let $S = [s_0, s_1, \ldots, s_n] \subset S^1$ be a  finite cyclically ordered nonempty set. We will understand the indices modulo $n+1$
so that $s_{i+n+1}$ also denotes $s_i$.

We define the unital $k[u, u^{-1}]$-linear $A_\oo$-category  $\cC_{S}$ as follows.

(1) Objects: $\Ob \cC_{S}= S$.

(2) Morphisms: the free $k[u, u^{-1}]$-modules generated by  the identity morphisms and  
  additional degree one morphisms
$$
\xymatrix{
 v_{i} \in \hom_{\cC_S}(s_i, s_{i+1}),
&
|v_{i} |= 1,
&
\mbox{for $i =0, \ldots, n$.}
}
$$

(3)  Compositions: all are  zero except for the $k[u, u^{-1}]$-linear maps given by the  identity compositions
and the additional compositions
$$
\xymatrix{
\mu_{n+1}( v_{i+ n}, \ldots, v_{i}) = u \id_{s_i},
&
\mbox{for $i =0, \ldots, n$.}
}
$$
\end{defn}

To check that $\cC_S$ satisfies the  quadratic $A_\oo$-equations, it suffices to consider composable sequences of the (non-identity) degree one generating  morphisms. By construction, each term of the $A_\oo$-equations  individually vanishes on such sequences, except for sequences of the form 
$$
\xymatrix{
s_i \ar[r]^-{v_i} &  s_{i+1} \ar[r]^{v_{i+1}} & \cdots  \ar[r]^-{v_{i+n}} & s_{i+n +1}  \ar[r]^-{v_{i}} & s_{i+1},
& \mbox{for $i =0, \ldots, n$.}
}
$$
For such  sequences, two terms will not vanish, but indeed cancel each other
$$\begin{array}{c}
 \mu_2(v_i, \mu_{n+1}(v_{i+n}, \ldots, v_{i+1}, v_i))
+ (-1)^{|v_i|-1}\mu_2(\mu_{n+1}(v_i, v_{i+n}, \ldots, v_{i+1}), v_i) \\
= \mu_2(v_i, u\id_{s_i})
+ \mu_2(u\id_{s_{i+1}}, v_i) 
= u (v_i + (-1)^{|v_i|} v_i) = 0
\end{array}
$$

\begin{remark}
Note that the  (non-identity)  degree one generating morphisms of $\cC_S$ are in natural bijection with the components of the complement $S^1\setminus S$.
\end{remark}

\begin{example}[$n=0$]\label{ex n=0}
When $S=[s_0]$, we see that $\cC_{S}$ consists of one object $s_0$ with  endomorphism complex
$$
\xymatrix{
\hom_{\cC_S}(s_0, s_0) = k[u, u^{-1}] \id_{s_0}   \oplus k[u, u^{-1}]v_{0}
&
\mu_1(v_{0}) = u \id_{s_0}
}$$

Thus $\hom_{\cC_S}(s_0, s_0)$ is acyclic, and hence $\cC_S$ is quasi-equivalent to the zero $A_\oo$-category.
\end{example}

\begin{example}[$n=1$]
When $S=[s_0, s_1]$, we see that $\cC_S$ consists of two objects $s_0, s_1$ with  morphism complexes
$$
\xymatrix{
\hom_{\cC_S}(s_0, s_0) = k[u, u^{-1}] \id_{s_0}
& 
\hom_{\cC_S}(s_1, s_1) = k[u, u^{-1}] \id_{s_1}
}$$
$$
\xymatrix{
\hom_{\cC_S}(s_0, s_1) = k[u, u^{-1}] v_0
& 
\hom_{\cC_S}(s_1, s_0) = k[u, u^{-1}] v_1
}$$
with the only non-zero or non-identity compositions  the $k[u, u^{-1}]$-linear maps given by
$$
\xymatrix{
\mu_2(v_1, v_0) = u \id_{s_0}
& 
\mu_2(v_0, v_1) = u \id_{s_1}}
$$

Thus $\cC_S$ is non-canonically equivalent to the full $A_\oo$-subcategory of graded $k[u, u^{-1}]$-modules given by the free module $ k[u, u^{-1}]$ and its odd shift $ k[u, u^{-1}] [1]$. To give such an equivalence, either object $s_0$ or $s_1$ can be sent to $ k[u, u^{-1}]$,
and then the other will be sent to $ k[u, u^{-1}][1]$.
 \end{example}

%


\section{Pointed version: injective cyclic functoriality}
We continue to follow the conventions on $A_\oo$-categories from Seidel's book~\cite{Seidel}.
By an $A_\oo$-functor $\cF:\cC\to \cD$ between two-periodic $A_\oo$-categories, we will mean the following data:
\begin{enumerate}
\item A map of sets $\cF:\Ob \cC\to\Ob \cD$.
\item For every $d\geq 1$, and objects $c_0, \ldots, c_d\in \Ob \cC$, a $k[u, u^{-1}]$-linear  map
$$
\xymatrix{
\cF_d:\hom_\cC(c_{d-1}, c_d) \otimes \cdots \otimes \hom_\cC(c_{0}, c_1) \ar[r] & \hom_\cD(\cF c_0, \cF c_d)[1-d]
}
$$
The  maps must satisfy the polynomial equations
$$
\begin{array}{c}
\displaystyle
\sum_r \sum_{k_1, \ldots, k_r}
\mu_r(\cF_{k_r}(a_{d}, \ldots, a_{d-k_r+1}), \ldots, \cF_{k_1}(a_{k_1}, \ldots, a_1))\\
=
\displaystyle
\sum_{m = 0}^d \sum_{n =0}^{d-m} (-1)^{\dag_n} \cF_{d-m+1} (a_d, \ldots, a_{n+m+1}, \mu_m(a_{n+m}, \ldots, a_{n+1}), a_n, \ldots, a_1) 
\end{array}
$$
where the sums of the left hand side run over all $r\geq 1$ and partitions $k_1 +\cdots + k_r = d$,
and the sign of the right hand side is determined by $\dag_n = |a_1| + \cdots + |a_n| - n$.
\end{enumerate}

The composition of $A_\oo$-functors $\cF:\cC\to \cD$, $\cG:\cD\to \cE$ is the $A_\oo$-functor defined by:
\begin{enumerate}
\item The composite map of sets $\cG\circ \cF:\Ob \cC\to\Ob \cE$.
\item The composite $k[u, u^{-1}]$-linear maps 
$$
(\cG\circ \cF)_d(a_d, \ldots, a_1)= 
\displaystyle
\sum_r \sum_{k_1, \ldots, k_r}
\cG_r(\cF_{k_r}(a_{d}, \ldots, a_{d-k_r+1}), \ldots, \cF_{k_1}(a_{k_1}, \ldots, a_1))
$$
where the sums run over all $r\geq 1$ and partitions $k_1 +\cdots + k_r = d$,
\end{enumerate}

Composition is strictly associative with unit the identity functor $\id_\cC:\cC\to \cC$ given by the identity on objects, $(\id_\cC)_1$ the identity on morphisms, and $(\id_\cC)_d = 0$, for all $d\geq 2$.

\begin{remark}
All of our $A_\oo$-functors will be strictly unital: 
\begin{enumerate}
\item for each object $c \in \Ob \cC$, we have 
 $\cF_1(\id_c) = \id_{\cF(c)}$,
\item for $d\geq 2$, the map $\cF_d$ vanishes if any entry is an identity morphism.
\end{enumerate}

\end{remark}

\begin{defn}
Let $\cA^{\str}_\per$ denote the strict category with objects  $k[u,u^{-1}]$-linear  $A_\oo$-categories and morphisms 
$k[u,u^{-1}]$-linear   $A_\oo$-functors.
(We refer to it as strict since we will not consider any natural transformations between functors which are not the identity transformation.)
\end{defn}

Let $\ul \Lambda_{\inj} \subset \ul \Lambda$ denote the non-full subcategory with objects  finite cyclically ordered nonempty  sets and morphisms   cyclic maps that are injective as set maps. 

\begin{remark}
It is worth noting that the forgetful functor $\ul \Lambda\to \ul \Fin$ is not faithful, but its restriction
$\ul \Lambda_\inj \to \ul \Fin$ is in fact faithful.
\end{remark}

\begin{defn}
Let $S \subset S^1$ be a finite cyclically ordered nonempty   set.

We define the unital $k[u, u^{-1}]$-linear $A_\oo$-category  $\cC_{S, 0}$ to be the $A_\oo$-category $\cC_S$ with a zero object $0 
$ adjoined. In other words, $\Ob \cC_{S, 0} = \Ob \cC_S \sqcup \{0\}$, and the morphism spaces in $\cC_{S, 0}$ with domain or target $0$ are all zero.
\end{defn}

\begin{prop}\label{prop inj funct}
The assignment of the $A_\oo$-category $\cC_{S,0}$ to a finite cyclically ordered nonempty  set $S$ naturally extends to a functor 
on the injective cyclic category
$$
\xymatrix{
\ul \Lambda_{\inj}^{op}\ar[r] &  \cA_\per^\str
}
$$
\end{prop}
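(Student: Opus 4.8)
The plan is to construct the functor by hand on generators, check functoriality directly, and then reduce the general check to a small number of combinatorial cases using the faithfulness of $\ul\Lambda_\inj\to\ul\Fin$. First I would fix notation: given an injective cyclic map $\varphi:S\to S'$, we want an $A_\oo$-functor $\cF_\varphi:\cC_{S',0}\to\cC_{S,0}$ (note the variance). On objects, send $s'\in S'$ to $s\in S$ if $\varphi(s)=s'$ (using injectivity, there is at most one such $s$), and send $s'$ to the zero object $0$ if $s'\notin\im\varphi$; also send the adjoined $0$ to $0$. On morphisms the only nontrivial choice is $\cF_1$ on the degree one generators $v'_j\in\hom(s'_j,s'_{j+1})$: if the arc of $S^1\setminus S'$ just after $s'_j$ contains no point of $\im\varphi$ and both endpoints $s'_j,s'_{j+1}$ lie in $\im\varphi$, say $\varphi(s_i)=s'_j$ and $\varphi(s_{i+1})=s'_{j+1}$ with $s_i,s_{i+1}$ cyclically adjacent in $S$, then set $\cF_1(v'_j)=v_i$; in every other case (an endpoint maps from outside $\im\varphi$, or the source arc swallows some point of $\im\varphi$ so $s'_j,s'_{j+1}$ are not images of adjacent elements) set $\cF_1(v'_j)=0$. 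Set $\cF_d=0$ for $d\geq 2$. This is manifestly strictly unital and $k[u,u^{-1}]$-linear.

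Next I would verify the $A_\oo$-functor equations. Since $\cF_d=0$ for $d\geq 2$ and the target $\cC_{S,0}$ has only $\mu_1$ (when some $|S|=1$ copy appears) and $\mu_2,\mu_{|S|+1}$ nonzero, the equation collapses: the left side is a single term $\mu_r(\cF_1(a_d),\ldots,\cF_1(a_1))$ (one $\cF_1$ per input), and the right side is $\sum \cF_1(\cdots\mu_m(\cdots)\cdots)$ with $d-m+1=1$, i.e. $\cF_1$ applied to a single $\mu$ of the whole string. So the content is exactly: $\cF_1$ intertwines $\mu_{|S'|+1}$ on $\cC_{S'}$ with $\mu_{|S|+1}$ on $\cC_S$ (and likewise the degenerate $\mu_1$ when $|S'|=1$), and kills everything not in the image of such a composition. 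Concretely, for a maximal composable string $v'_{j+|S'|},\ldots,v'_j$ going once around, $\mu_{|S'|+1}$ of it is $u\,\id_{s'_j}$; we need $\cF_1(u\,\id_{s'_j})=u\cF_1(\id_{s'_j})$ to equal $\mu_{|S|+1}$ (or $\mu_1$) of $\cF_1(v'_{j+|S'|}),\ldots,\cF_1(v'_j)$. If $s'_j\in\im\varphi$, injectivity plus the cyclic order forces: going once around $S'$ and recording which arcs of $S^1\setminus S'$ contain points of $\im\varphi$, the images under $\cF_1$ of the $v'$'s are exactly the $v$'s of $S$ in their cyclic order, with the $v'$'s on "swallowing" arcs sent to $0$; but a composable $A_\oo$ composition in $\cC_S$ vanishes as soon as one factor is $0$, \emph{unless} there are no $0$'s, which happens precisely when $\varphi$ is a cyclic-order-preserving bijection onto $S'$ restricted appropriately — and then the string maps to the full around-the-circle string of $S$, whose $\mu_{|S|+1}$ is $u\,\id_{s_i}=\cF_1(u\,\id_{s'_j})$ as required. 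When some factor maps to $0$, both sides vanish. When $s'_j\notin\im\varphi$, $\cF_1(\id_{s'_j})=0$ and again both sides vanish. One also checks the non-maximal strings: every individual term on both sides is zero because no proper substring admits a nonzero $\mu$ in either category and $\cF_{\geq 2}=0$.

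Then I would check functoriality in $\varphi$: that $\cF_{\psi\circ\varphi}=\cF_\varphi\circ\cF_\psi$ for composable injective cyclic maps, and $\cF_{\id}=\id$. The object-level statement is immediate from the definition (composite of partially-defined inverses), and on $\cF_1$ one must see that the "no swallowed point of the image, both endpoints hit" condition for $\psi\varphi$ matches the composite condition — this is a transitivity property of arcs: an arc of $S^1\setminus S'''$ contains no point of $\im(\psi\varphi)$ iff, after pulling back through $\psi$ then $\varphi$, the corresponding finer arcs contain no points of the relevant images. Since all higher $\cF_d$ vanish, the composition formula for $A_\oo$-functors reduces to $(\cF_\varphi\circ\cF_\psi)_1=(\cF_\varphi)_1\circ(\cF_\psi)_1$ and $(\cF_\varphi\circ\cF_\psi)_d=0$ for $d\geq 2$, so there is nothing further. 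To organize the verification efficiently I would invoke the remark that $\ul\Lambda_\inj\to\ul\Fin$ is faithful: it suffices to define the functor and check relations on a generating set of morphisms of $\ul\Lambda_\inj^{op}$ — e.g. the elementary "insert one point into an arc" maps and the cyclic rotations — and the faithfulness means no hidden relations among composites need separate treatment beyond the underlying finite-set identities, which are classical.

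The main obstacle I anticipate is purely bookkeeping: getting the variance and the index shifts right in the definition of $\cF_1$ on the $v'_j$, and making airtight the claim that the \emph{only} way a maximal composable string in $\cC_{S'}$ maps to a nonzero element of $\cC_{S,0}$ is when $\varphi$ restricts to a cyclic isomorphism — i.e. that injective-but-not-surjective $\varphi$, or $\varphi$ whose complementary arcs swallow image points, always produce a $0$ factor. Everything else (the $A_\oo$ equations, unitality, strict associativity of composition) is forced by the vanishing of $\cF_{\geq 2}$ and the extreme sparseness of the compositions in $\cC_S$, so the proof is essentially the verification of this one combinatorial dichotomy together with the arc-transitivity needed for functoriality.
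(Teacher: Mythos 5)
Your construction breaks down at the key point: you set $\cF_d=0$ for all $d\geq 2$, and such a ``first-order'' functor does not satisfy the $A_\oo$-functor equations unless the cyclic injection is also surjective. Test it on $S=[s_0]$, $T=[t_0,t_1]$, $\varphi(s_0)=t_0$: in $\cC_{T,0}$ one has $\mu_2(v_1,v_0)=u\,\id_{t_0}$, so the right-hand side of the functor equation for the string $(v_1,v_0)$ is $\cF_1(u\,\id_{t_0})=u\,\id_{s_0}\neq 0$ (forced by strict unitality and $k[u,u^{-1}]$-linearity), while your left-hand side is $\mu_2(\cF_1(v_1),\cF_1(v_0))=0$ because $t_1\notin\im\varphi$ kills both factors, and $\mu_1(\cF_2(v_1,v_0))=0$ since $\cF_2=0$. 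The same failure occurs for every non-surjective injection: on the full cycle of $T$ based at an image point $t_j=\varphi(s_i)$, the right-hand side is $u\,\id_{s_i}$ but every composition on the left contains a generator whose endpoints are not consecutive image points, hence a zero factor. Your claimed dichotomy ``when some factor maps to $0$, both sides vanish'' is exactly where the argument fails; this is not bookkeeping but the crux of the proposition. (A small symptom of the same confusion: your condition that the arc of $S^1\setminus S'$ after $s'_j$ ``contains no point of $\im\varphi$'' is vacuous, since $\im\varphi\subset S'$.)

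The paper's proof repairs precisely this by giving $f^*$ nontrivial higher components. Calling an interval $[t_0,\ldots,t_k]\subset T$ an $f$-interval of length $k$ when $t_0,t_k\in f(S)$ and the interior points lie outside $f(S)$, it sets $f^*_k(v_{k-1},\ldots,v_0)=w_0$, the generating morphism of $\cC_S$ between the preimages of the endpoints, and zero otherwise. Then on a full cycle the unique partition into $f$-intervals contributes $\mu_r(f^*_{k_r}(\cdots),\ldots,f^*_{k_1}(\cdots))=u\,\id_{s_0}=f^*_1(\mu_{|T|}(v_{n},\ldots,v_0))$, which is exactly the cancellation your version lacks; composition of these functors is then checked by the same $f$-interval combinatorics (the composite's intervals refine uniquely into intervals for each factor). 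Your object-level assignment and your $\cF_1$ on length-one intervals agree with the paper, and your reduction of the equations to full cycles is sound, so the missing ingredient is specifically the nonzero $f^*_k$ for $k\geq 2$; without them no choice of $\cF_1$ alone can work.
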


\begin{proof}
Let $f:S \to T$ be a cyclic injection.
  We will   define a unital $A_\oo$-functor 
$$
\xymatrix{
f^*:\cC_{T, 0}\ar[r] & \cC_{S, 0}
}
$$

For the map of objects $f^*:T\sqcup \{0\} \to S\sqcup\{0\}$, we set $f^*(0) = 0$, and for $t\in T$, we set
$$
f^*(t) 
= \left\{
\begin{array}{ll}
s & \mbox{ when $t= f(s)$}\\
 0 &  \mbox{ when $t\not\in  f(S)$}
\end{array}
\right.
$$

For the maps of morphisms, let us introduce the term {\em $f$-interval of length $k$},  to refer to any interval of elements $[t_0, \ldots, t_{k}] \subset T$ such that $t_0, t_{k}\in f(S)$ and $t_1, \ldots, t_{k-1}\not\in f(S)$, for some $k\geq 1$. (We allow the possibility that $t_0 = t_k$.) 

For $k\geq 1$, we define the map $f^*_k$ on composable sequences of the (non-identity) degree one  generating morphisms 
$$
\xymatrix{
t_0 \ar[r]^-{v_{0}} & t_1 \ar[r]^-{v_{1}} & \cdots \ar[r]^-{v_{k-1}} & t_{k}
}
$$
to be zero, except when $[t_0, \ldots, t_{k}]$ is an $f$-interval of length $k$, in which case we set
$$
f^*_k(v_{k-1}, \ldots, v_{0}) = w_0
$$
where  $w_0 \in \hom_{\cC_S}(s_0, s_1)$ denotes the degree one generating  morphism
for $s_0, s_1\in S$  the unique elements with $f(s_0) = t_0, f(s_1) = t_{k}$.

To check that $f^*$ defines a unital $A_\oo$-functor, it suffices to consider composable sequences of  the  (non-identity) degree one generating 
morphisms. 
By construction, each term of the equations individually vanishes on such sequences, except possibly for  complete cycles
$$
\xymatrix{
t_0 \ar[r]^-{v_{0}} & t_1 \ar[r]^-{v_{1}} & \cdots \ar[r]^-{v_{n-1}} & t_n  \ar[r]^-{v_{n}} & t_{0}
}
$$
Furthermore, for some term not to vanish, there must be a (necessarily unique) element $s_0\in S$ with $f(s_0) = t_0$.
For such  sequences, two terms will not vanish, but indeed equal each other
$$\begin{array}{c}
\mu_r(f^*_{k_r}(v_{n}, \ldots, v_{n- k_r+1}), \ldots, f^*_{k_1}(v_{k_1}, \ldots, v_0)) = u \id_{s_0} =   f^*_1(\mu_{n+1}(v_n,\ldots, v_0))
\end{array}
$$
where 
 the partition of the left hand side is the unique partition of the sequence into $f$-intervals.

Finally, we check that such functors compose as asserted.
Let $f:S \to T$, $g:T\to U$ be cyclic injections.  On objects, we clearly have $ (g\circ f)^* = f^*\circ g^* $.
On morphisms, it suffices to consider composable sequences of  the  (non-identity) degree one generating 
morphisms. 
By construction, each term of the equations individually vanishes on such sequences, except  for $(g\circ f)$-intervals
$$
\xymatrix{
u_0 \ar[r]^-{v_{0}} & u_1 \ar[r]^-{v_{1}} & \cdots \ar[r]^-{v_{k-1}} & u_k
}
$$
For such  sequences, two terms will not vanish, but indeed equal each other
$$\begin{array}{c}
 (g\circ f)^*_k(v_{k-1}, \ldots, v_0) = w_0 = g^*_r(f_{k_r}(v_k , \ldots, v_{k - k_r + 1}), \ldots, f_{k_1}(v_{k_1}, \ldots, v_0))
 \end{array}
$$
where  $w_0 \in \hom_{\cC_S}(s_0, s_1)$ denotes the degree one generating  morphism
for $s_0, s_1\in S$  the unique elements with $f(s_0) = t_0, f(s_1) = t_{k}$,
and
the partition of the right hand side is the unique partition of the sequence into $g$-intervals.
\end{proof}

\begin{remark}
It is worth informally noting that the functors of the proposition are the natural quotients by the objects not in the image of the cyclic set map.
\end{remark}


\section{Twisted complexes: surjective cyclic functoriality}

We continue to follow the conventions on $A_\oo$-categories from Seidel's book~\cite{Seidel}.

\begin{defn}
Let $S\subset S^1$ be a finite  cyclically ordered  nonempty set.

We define the unital $k[u, u^{-1}]$-linear $A_\oo$-category  $\cC_{S, st}$ to be the $A_\oo$-category of twisted complexes of $\cC_S$,
or equivalently of $\cC_{S, 0}$.
\end{defn}

Let $\cQ_n$ denote the  $k[u, u^{-1}]$-linear triangulated differential graded derived category of finite-dimensional modules over the $A_n$-quiver
$$
\xymatrix{
\underset{1}{\bullet} \ar@<+.1cm>[r] & \underset{2}{\bullet} \ar@<+.1cm>[r] & \cdots \ar@<+.1cm>[r] & \underset{n}{\bullet}
}
$$ 
For $i=1, \ldots, n$, denote by $m_i \in \cQ_n$  the free rank one module supported at the $i$th vertex and zero elsewhere.
Note that among these objects,  there are degree one generating morphisms $w_i:m_i \to m_{i+1}$, for $i=1, \ldots, n-1$, and no other linearly independent non-identity morphisms.

\begin{prop}\label{prop quiver}
Let $S= [s_0, s_1, \ldots, s_n]\subset S^1$ be a  finite cyclically ordered nonempty set.

Fix the element $s_0\in S$ so that the remaining elements inherit the linear order $s_1, \ldots, s_n$.

Then there is a quasi-equivalence of $A_\oo$-categories
$$
\xymatrix{
F:\cQ_n \ar[r]^-\sim & \cC_{S, st}
}
$$
such that $F(m_i) = s_i$, for $i=1, \ldots, n$, and $F(w_i) = v_i$, for $i=1, \ldots, n-1$.
%
\end{prop}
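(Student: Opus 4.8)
The plan is to construct the quasi-equivalence $F$ concretely and then verify it is an $A_\oo$-equivalence by checking it is fully faithful on cohomology and essentially surjective. First I would set up an explicit model for $\cQ_n$. The path algebra of the $A_n$-quiver, over $k[u,u^{-1}]$, has a well-known small $A_\oo$-model: the Yoneda/Ext-algebra of the simple modules $m_1,\ldots,m_n$ is generated by the degree-one morphisms $w_i:m_i\to m_{i+1}$ ($i=1,\ldots,n-1$) together with the degree-two classes coming from the Koszul (minimal free) resolutions of the simples, with all higher products governed by the single relation that a maximal chain $w_{i+k-1}\circ\cdots\circ w_i$ of $k$ arrows is nullhomotopic. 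More precisely, $\cQ_n$ is quasi-equivalent to $\Tw(\cC_{S,0})$ restricted away from the zero object, because $\cC_{S,0}$ is exactly the $A_\oo$-category whose objects $s_1,\ldots,s_n,s_0$ model the simples together with one extra object killing the cycle; this is the structural point that makes the statement natural rather than a coincidence. So Step~1 is to recall/record a minimal $A_\oo$-presentation of $\cQ_n$ on generators $m_1,\ldots,m_n$ with morphisms $w_i$, and verify its only nonvanishing higher product is $\mu_k(w_{i+k-2},\ldots,w_i)$ landing in a degree-two Ext class.

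Step~2 is to define $F$ on objects by $F(m_i)=s_i$ and on morphisms by $F_1(w_i)=v_i$ for $i=1,\ldots,n-1$, with all higher $F_d$ taken to be zero, and check that $F$ satisfies the $A_\oo$-functor equations. Since $\cC_{S,st}=\Tw(\cC_S)$ and $\Tw$ is functorial, it suffices to define the corresponding map into $\cC_S$ (or $\cC_{S,0}$) and then post-compose with the inclusion $\cC_S\hookrightarrow\Tw(\cC_S)$; equivalently, use that $F$ automatically extends to twisted complexes. The functor equations reduce, exactly as in the proof of Proposition~\ref{prop inj funct}, to composable sequences of the generators $w_i$. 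The only composable sequences among $w_1,\ldots,w_{n-1}$ of length $\geq 2$ are the sub-sequences $w_{i+k-2},\ldots,w_i$, and on these the $A_\oo$-functor equation for $F$ becomes $F_1(\mu_k(w_{i+k-2},\ldots,w_i))=\mu_k(v_{i+k-2},\ldots,v_i)$; since in $\cC_S$ we have $\mu_k=0$ for $2\le k\le n$ (only $\mu_{n+1}$ of a full cycle is nonzero) and correspondingly the degree-two Ext class in $\cQ_n$ maps to zero (there is no room for it, as $s_0$ is the object carrying the cycle and it is not in the image), both sides vanish. The longest chain $w_{n-1}\circ\cdots\circ w_1$ is not a closed cycle, so no $\mu_{n+1}$-type term appears; this is why adjoining $s_0$ is exactly what is needed and why one cannot see the full periodicity from $\cQ_n$ alone.

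Step~3 is to prove $F$ is a quasi-equivalence. Essential surjectivity: the objects $s_1,\ldots,s_n$ are in the image on the nose, and $s_0$ is quasi-isomorphic in $\Tw(\cC_S)$ to the twisted complex built from $s_1,\ldots,s_n$ with differential given by $v_1,\ldots,v_{n-1}$ (a "total complex" of the $A_n$-chain), precisely because the relation $\mu_{n+1}(v_n,\ldots,v_0)=u\,\id_{s_0}$ forces $v_n$ to exhibit $s_0$ as the cone of that complex up to the two-periodicity twist — hence $s_0$ lies in the triangulated envelope of $F(m_1),\ldots,F(m_n)$, and since $\{m_i\}$ generate $\cQ_n$ the induced triangulated subcategory is everything. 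Full faithfulness on cohomology: one computes $H^*\hom_{\cC_{S,st}}(s_i,s_j)$ directly from the bar/twisted-complex differential and compares with $\Ext^*_{\cQ_n}(m_i,m_j)=\Ext^*$ of simples over the $A_n$-quiver (a rank-one $k[u,u^{-1}]$-module in degrees $0$ and $2|i-j|$, appropriately, encoding the two-periodic Koszul resolution), checking $F_1$ induces an isomorphism. The main obstacle I anticipate is this last bookkeeping: carefully setting up the minimal $A_\oo$-model of $\cQ_n$ with the correct $u$-twists and signs so that the comparison with the twisted-complex Hom-complexes of $\cC_{S,st}$ is a clean isomorphism rather than merely an abstract quasi-isomorphism — in other words, pinning down the degree-two generators in $\Ext^*_{\cQ_n}$ and confirming $F$ kills exactly them for the right reason (they are absorbed into the object $s_0$). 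Everything else is the same routine "each term vanishes except on full cycles" argument already used twice in the paper.
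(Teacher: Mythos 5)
Your overall architecture coincides with the paper's: define $F$ on the simples by $F(m_i)=s_i$, $F_1(w_i)=v_i$ with all higher components zero, extend over twisted complexes, and obtain essential surjectivity by identifying $s_0$, up to shift, with the twisted complex $(s_1\to s_2\to\cdots\to s_n)$, using $v_0$, $v_n$ and the relation $\mu_{n+1}(v_{i+n},\ldots,v_i)=u\,\id_{s_i}$. That skeleton is exactly the paper's proof.

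However, your model of $\cQ_n$ in Step 1 (and the hom computation in Step 3) is wrong, and the error is not cosmetic. The path algebra of the $A_n$-quiver is hereditary, so the simples $m_i$ have projective dimension at most one and $\Ext^{\geq 2}(m_i,m_j)=0$ for all $i,j$; moreover $\Ext^*(m_i,m_j)$ vanishes outright unless $j=i$ (identities) or $j=i+1$ (the classes $w_i$) --- this is precisely the paper's remark that there are ``no other linearly independent non-identity morphisms.'' There are no ``degree-two classes coming from Koszul resolutions,'' there is no nonvanishing higher product $\mu_k(w_{i+k-2},\ldots,w_i)$ (it would have to land in $\hom_{\cQ_n}(m_i,m_{i+k-1})$, which is zero even two-periodically, since both $\Ext^0$ and $\Ext^1$ between non-adjacent simples vanish), and $\Ext^*_{\cQ_n}(m_i,m_j)$ is certainly not a rank-one $k[u,u^{-1}]$-module for every pair $(i,j)$. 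If the Ext-algebra were as you describe, your argument would refute the proposition rather than prove it: an $F_1$ that kills nonzero degree-two classes and maps allegedly nonzero homs between non-adjacent simples into the groups $\hom_{\cC_{S,st}}(s_i,s_j)$, which are zero for $i,j\in\{1,\ldots,n\}$ non-adjacent, cannot be fully faithful, so the claim that ``$F$ kills exactly them \ldots they are absorbed into $s_0$'' is incompatible with $F$ being a quasi-equivalence. The correct and much simpler statement is that the minimal $A_\oo$-model of the full subcategory of $\cQ_n$ on $m_1,\ldots,m_n$ is strictly isomorphic to the full subcategory of $\cC_S$ on $s_1,\ldots,s_n$: free rank-one homs on the identities and on $w_i\mapsto v_i$, with all non-identity compositions zero on both sides (in $\cC_S$ because no complete cycle avoids $s_0$), so the functor equations are vacuous and full faithfulness is immediate; with that correction, your Steps 2--3 collapse to the paper's argument. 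Note also that your opening assertion that $\cQ_n$ is quasi-equivalent to the twisted complexes of $\cC_{S,0}$ away from the zero object is the content of the proposition itself, so it cannot be invoked as an input.
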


\begin{proof}
Clearly the functor $F$ is well-defined and a quasi-equivalence on the full subcategory of $\cC_{S, st}$ generated by $s_1, \ldots, s_n$. (Note there are no non-identity compositions among those objects.) To see it is essentially surjective, observe that $s_0$ is isomorphic to the shift by one of the twisted complex
$$
\xymatrix{
s_0' = (s_1\ar[r]^-{v_1} & s_2\ar[r]^-{v_2} & \cdots \ar[r]^-{v_{n-1}} & s_n)
}
$$
Namely the degree one morphisms  $v_0\in \hom_{\cC_S}(v_0, v_1)$ and $v_n\in \hom_{\cC_S}(v_n, v_0)$ induce degree zero  isomorphisms $v_0'\in \hom_{\cC_{S, st}}(s_0, s_0'[-1])$ and $v'_n\in \hom_{\cC_{S, st}}(s_0'[1], s_0)$ which are inverse (up to a two-periodic shift).
\end{proof}

Let $\ul \Lambda_{\surj} \subset \ul \Lambda$ denote the non-full subcategory with objects  finite cyclically ordered nonempty  sets and morphisms   cyclic maps that are surjective as set maps. 

\begin{remark}
It is worth noting that the forgetful functor $\ul \Lambda_\surj \to \ul \Fin$ is not faithful: for example, there are two distinct maps of cyclic objects  $\{1, -1\} \to \{1\}$
\end{remark}

\begin{prop}\label{prop inj funct}
The assignment of the $A_\oo$-category $\cC_{S,st}$ to a finite cyclically ordered nonempty  set $S$ naturally extends to a functor 
on the surjective cyclic category
$$
\xymatrix{
\ul \Lambda_{\surj}^{op}\ar[r] &  \cA_\per^\str
}
$$
\end{prop}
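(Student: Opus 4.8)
The plan is to construct, for each cyclic surjection $f\colon S\to T$, a unital $A_\oo$-functor $f_*\colon \cC_{S,st}\to\cC_{T,st}$ and to check functoriality. The key geometric observation is that a cyclic \emph{surjection} $f\colon S\to T$ is the same data as, for each $t\in T$, a nonempty "fiber interval" $f^{-1}(t)=[s_{t,0},\dots,s_{t,k_t}]\subset S$, these intervals being consecutive and partitioning $S$ in the cyclic order. (The non-faithfulness noted in the remark before the statement is exactly the ambiguity of which element of the fiber over a single point is the "first" one, so the data of $f$ includes a choice of basepoint in each fiber.) The natural thing a surjection should do on representations of $A$-quivers is \emph{collapse} each fiber interval to a point by taking the associated twisted complex, i.e. dualize the essential surjectivity argument already used in the proof of Proposition~\ref{prop quiver}: there $s_0$ was exhibited as (a shift of) the twisted complex $(s_1\to\cdots\to s_n)$ built out of a whole $f$-interval.

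Concretely, I would proceed as follows. First, reduce to generators on the nose: since $\cC_{S,st}$ is the category of twisted complexes on $\cC_S$, and $\cC_S$ is freely generated by the $v_i$ subject only to the relations $\mu_{n+1}(v_{i+n},\dots,v_i)=u\,\id_{s_i}$, it suffices to define an $A_\oo$-functor out of $\cC_S$ landing in $\cC_{T,st}$ (twisted complexes on the target) and then extend canonically to twisted complexes. So I must send each object $s\in S$ to a twisted complex over $\cC_T$ and each generator $v_i\in\hom(s_i,s_{i+1})$ to a (degree one) morphism of twisted complexes, then define higher $f_*$-terms, and verify the $A_\oo$-functor equations. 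The definition: send $s\in S$ with $f(s)=t$ and $s$ sitting in position $j$ of the fiber interval $[s_{t,0},\dots,s_{t,k_t}]$ to the "truncated" twisted complex $(s_{t,0}\to\cdots$ stopped appropriately$)$ expressed in terms of the single target object $t$; on a generator $v_i$ that stays inside one fiber (i.e. $f(s_i)=f(s_{i+1})$) send it to the structure map of that twisted complex, and on a generator $v_i$ crossing from fiber $t$ to fiber $t'$ send it essentially to the generator $v\in\hom_{\cC_T}(t,t')$ (up to the identifications from Proposition~\ref{prop quiver}); higher terms $f_{*k}$ record the $A_\oo$-homotopies implicit in the decomposition of a long composable string of $v_i$'s into its constituent fiber-intervals, and vanish otherwise.

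Then I would verify the $A_\oo$-functor equations term by term on composable strings of the $v_i$. As in the proofs of the injective case and of the $A_\oo$-equations for $\cC_S$ itself, almost every term vanishes identically; the only strings to worry about are full cycles $s_i\to s_{i+1}\to\cdots\to s_{i+n+1}=s_i$, where the relation $\mu_{n+1}=u\,\id$ fires. On such a full cycle the left side of the functor equation assembles from the partition of the cycle into fiber-intervals (each interior fiber-interval contributing a structure map of a twisted complex, each "crossing" step contributing a $v$ in $\cC_T$), and this is arranged precisely so that the composite $\mu$-application in $\cC_T$ equals $u\,\id_t$, matching $f_{*1}(\mu_{n+1}(\dots))=f_{*1}(u\,\id_{s_i})=u\,\id_t$ on the right. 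Finally, functoriality $(g\circ f)_*=g_*\circ f_*$ follows because fiber-intervals of a composite surjection are unions of fiber-intervals, so the partitions nest — the same bookkeeping as in the injective proposition, run in the opposite direction.

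The main obstacle I expect is \emph{bookkeeping the higher $A_\oo$-terms $f_{*k}$ for $k\ge 2$ and pinning down signs}. In the injective case the functor was "almost strict" ($f^*_k$ was nonzero only on $f$-intervals, sending them to a single generator), and one could hope the surjective functor is similarly concentrated; but collapsing a fiber interval to a twisted complex genuinely involves the telescope/iterated-cone data, so unlike the injective case $f_*$ will not in general be a strict functor, and getting the higher components and the signs $\dag_n=|a_1|+\cdots+|a_n|-n$ to line up in the functor equations (especially on the full cycles where two surviving terms must \emph{equal}, not merely cancel) is the delicate part. A clean way to sidestep much of it is to phrase the definition via the quasi-equivalences $F\colon\cQ_{|T|-1}\risom\cC_{T,st}$ of Proposition~\ref{prop quiver} together with the evident restriction-of-filtration functors among $A$-quiver derived categories, but making that strictly compatible (landing in $\cA^\str_\per$, not just up to homotopy) requires a careful model, and reconciling it with the combinatorial formula is exactly where the work lies.
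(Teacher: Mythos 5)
There is a genuine gap, and it is the direction of the functor. The statement asserts a functor on $\ul\Lambda_{\surj}^{op}$, so a cyclic surjection $f\colon S\to T$ must induce a functor $f^*\colon \cC_{T,st}\to\cC_{S,st}$; this variance is not negotiable, since in the proof of the full cyclic functoriality these surjective functors must compose with the (also contravariant) injective functors $\cC_{T,0}\to\cC_{S,0}$ from the injective case. Your proposal instead builds a pushforward $f_*\colon\cC_{S,st}\to\cC_{T,st}$, which at best would give a covariant functor on $\ul\Lambda_{\surj}$ --- a different statement. The correct (and much simpler) construction runs the "collapse a fiber to a twisted complex" idea the other way: each object $t\in T$ is sent to the twisted complex $(s_0(t)\to s_1(t)\to\cdots\to s_k(t))$ on its fiber $f^{-1}(t)\subset S$, built from the degree one generators of $\cC_S$; on a generator $v\in\hom_{\cC_T}(t_0,t_1)$ one sets $f^*_1(v)$ to be projection onto the last fiber element of $f^*(t_0)$, followed by the generator $s_k(t_0)\to s_0(t_1)$, followed by inclusion of the first fiber element of $f^*(t_1)$; and \emph{all higher components vanish}, $f^*_k=0$ for $k>1$. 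The only nontrivial check is then the full-cycle identity $\mu_{n+1}(f^*_1(v_n),\ldots,f^*_1(v_0))=u\,\id_{s_0(t_0)}=f^*_1(\mu_{n+1}(v_n,\ldots,v_0))$, which follows from the formula for compositions of morphisms of twisted complexes, and functoriality on composites is immediate on single generators.

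Beyond the variance issue, your object-level assignment does not make sense as written: you propose to send $s\in S$ to a "truncated twisted complex expressed in terms of the single target object $t$," but $\hom_{\cC_T}(t,t)=k[u,u^{-1}]\id_t$ is concentrated in even degrees, so twisted complexes on copies of $t$ cannot encode the distinct fiber elements $s_{t,0},\ldots,s_{t,k_t}$; already for $|T|=1$ the target $\cC_{T,st}$ is acyclic while $\cC_{S,st}$ is not, so no informative collapse functor exists in your direction. Finally, the part you explicitly defer --- constructing the higher components $f_{*k}$, $k\ge 2$, and their signs --- is exactly where the content of your plan would have to live, so the proposal does not yet contain a proof; by contrast, in the pullback direction the functor is strict beyond its first component and no such homotopical bookkeeping is needed. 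Your suggested alternative via the quasi-equivalences with quiver categories faces the strictness obstruction you already note (the target is $\cA_\per^\str$), so it does not repair the argument.
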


\begin{proof}
Let $f:S\to T$ be a cyclic surjection.   We will   define a unital $A_\oo$-functor 
$$
\xymatrix{
f^*:\cC_{T, st}\ar[r] & \cC_{S, st}
}
$$
For this, it suffices to define $f^*$ on the full $A_\oo$-subcategory $\cC_T \subset \cC_{T, st}$.

For  $t\in T$, consider the fiber $f^{-1}(t) = [s_0(t), \ldots, s_k(t)] \subset S$ as an interval equipped with its natural linear ordering.
 For the map of objects, we set $f^*(t)$ to be the twisted complex
$$
\xymatrix{
(s_{0}(t) \ar[r]^-{w_{0}} & s_{1}(t)  \ar[r]^-{w_{1}} & \cdots  \ar[r]^-{w_{k-1}} & s_{k}(t))
}$$
where $w_i\in \hom_{\cC_S}(s_i(t), s_{i+1}(t))$ is the degree one generating  morphism.
Note each morphism appearing has degree one and thus no shifts are needed.
 To confirm the twisted complex satisfies the generalized Maurer-Cartan equation,
 note that the fiber is an interval and so by construction, the composition maps on all subcollections of the morphisms appearing in the twisted complex vanish.

For the maps of morphisms, on the (non-identity) degree one generating morphisms $v_0\in \hom_{\cC_T}(t_0, t_1)$, 
we define  the first component $f^*_1(v_0) \in  \hom_{\cC_S}(f^*(t_0), f^*(t_1))$ to be the composition of three morphisms: projection to the last term
$$
\xymatrix{
(s_{0}(t_0) \ar[r]^-{w_{0,0}} & s_{1}(t_0)  \ar[r]^-{w_{0, 1}} & \cdots  \ar[r]^-{w_{0, k-1}} & s_{k}(t_0))
\ar[r] & s_{k}
}$$
followed by the  degree one generating morphism
$$
\xymatrix{
s_{k}(t_0) \ar[r]^{w_k} & s_{0}(t_1)
}
$$
followed by inclusion to the first term
$$
\xymatrix{
 s_{0}(t_1)\ar[r] &
(s_{0}(t_1) \ar[r]^-{w_{1, 0}} & \sigma_{1}(t_1)  \ar[r]^-{w_{1, 1}} & \cdots  \ar[r]^-{w_{1, \ell-1}} & s_{\ell}(t_1))
}
$$
Here $w_{0, i}\in \hom_{\cC_S}(s_i(t_0), s_{i+1}(t_0))$, $w_{1, j}\in \hom_{\cC_S}(s_j(t_1), s_{j+1}(t_1))$ denote the 
degree one
generating morphisms.

For $k>1$, we set the higher components $f^*_k$ to be identically zero.

To check that $f^*$ defines a unital $A_\oo$-functor, it suffices to consider composable sequences of  the  (non-identity) degree one generating 
morphisms. 
By construction, each term of the equations individually vanishes on such sequences, except possibly for  complete cycles
$$
\xymatrix{
t_0 \ar[r]^-{v_{0}} & t_1 \ar[r]^-{v_{1}} & \cdots \ar[r]^-{v_{n-1}} & t_n  \ar[r]^-{v_{n}} & t_{0}
}
$$
For such  sequences, two terms will not vanish, but indeed equal each other
$$\begin{array}{c}
\mu_{n+1}(f^*_{1}(v_{n}), \ldots, f^*_{1}(v_0)) = u \id_{s_0(t_0)} =   f^*_1(\mu_{n+1}(v_n,\ldots, v_0))
\end{array}
$$
where as above $s_0(t_0)\in S$ denotes the initial endpoint of the fiber  $f^{-1}(t_0)\subset S$.
Note that the evaluation of the left hand side follows from the constructions and the definition of component maps between twisted complexes
$$
\mu_{n+1}(f^*_{1}(v_{n}), \ldots, f^*_{1}(v_0)) =
\mu_{m+1}(w_m, \ldots, w_0)
$$
where the right hand side is evaluated on a complete cycle
$$
\xymatrix{
s_0 \ar[r]^-{w_{0}} & s_1 \ar[r]^-{w_{1}} & \cdots \ar[r]^-{w_{n-1}} & s_n  \ar[r]^-{w_{n}} & s_{0}
}
$$
where we start at $s_0 = s_0(t_0)\in S$

Finally, we check that such functors compose as asserted.
Let $f:S \to T$, $g:T\to U$ be cyclic surjections.  On objects, it is clear we  have $ (g\circ f)^* = f^*\circ g^* $.
On morphisms, it suffices to consider composable sequences of  the  (non-identity) degree one generating 
morphisms. 
By construction, each term of the equations individually vanishes on such sequences, except  for single terms
$v_0\in \hom_{\cC_U}(u_0, u_1)$, where it is clear we have $ (g\circ f)_1^* = f_1^*\circ g_1^*$.
\end{proof}

\begin{remark}
It is worth informally noting that the functors of the proposition are fully faithful.
\end{remark}

\section{Full cyclic functoriality}

\begin{thm}\label{main thm}
The assignment of the $A_\oo$-category $\cC_{S,st}$ to a finite cyclically ordered nonempty set $S$ naturally extends to a functor 
on the full cyclic category
$$
\xymatrix{
\cC_{st}:\ul \Lambda^{op}\ar[r] &  \cA_\per^\str
}
$$
\end{thm}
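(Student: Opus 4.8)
The plan is to combine the two partial functorialities already established, using the fact that the cyclic category admits a factorization system in which every morphism decomposes essentially uniquely as a surjection followed by an injection. More precisely, every cyclic map $f\colon S\to T$ factors as $f = \iota \circ \pi$ where $\pi\colon S\to R$ is a cyclic surjection onto its image and $\iota\colon R\hookrightarrow T$ is a cyclic injection; this factorization is unique up to unique isomorphism because the underlying set maps factor uniquely and the cyclic structure on the image $R = f(S)$ is forced by that on $S$ (equivalently on $T$). Since Proposition~\ref{prop inj funct} (injective case) gives a functor $\ul\Lambda_{\inj}^{op}\to\cA_\per^\str$ via $\iota\mapsto \iota^*\colon\cC_{T,0}\to\cC_{R,0}$, and Proposition~\ref{prop inj funct} (surjective case) gives a functor $\ul\Lambda_{\surj}^{op}\to\cA_\per^\str$ via $\pi\mapsto\pi^*\colon\cC_{R,st}\to\cC_{S,st}$, the natural candidate is $f^* := \pi^* \circ \iota^*$, after passing the injective functor through the twisted-complex construction so that both have the category $\cC_{(-),st}$ as source and target.

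First I would check that the injective functoriality of Proposition~\ref{prop inj funct} descends to twisted complexes: applying the twisted-complex construction $\cC_{(-),0}\rightsquigarrow\cC_{(-),st}$ is itself functorial, and since $\cC_{S,st}$ is defined as the twisted complexes of $\cC_{S,0}$, any unital $A_\oo$-functor $\cC_{T,0}\to\cC_{S,0}$ induces one $\cC_{T,st}\to\cC_{S,st}$ compatibly with composition. This turns the injective functor into $\ul\Lambda_{\inj}^{op}\to\cA_\per^\str$, $S\mapsto\cC_{S,st}$, agreeing on objects with the surjective one. Next I would verify that $\cC_{st}$ as defined above is well-defined, i.e. independent of the chosen surjection–injection factorization: this is immediate from uniqueness of the factorization up to unique isomorphism, together with strict unitality, so that the two functors send the unique isomorphism to mutually inverse identity-on-objects $A_\oo$-isomorphisms.

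The main work — and the step I expect to be the genuine obstacle — is verifying functoriality of the composite assignment: given $f\colon S\to T$ and $h\colon T\to U$ with factorizations $f=\iota_f\pi_f$, $h=\iota_h\pi_h$, one must show $(h\circ f)^* = f^*\circ h^*$, which reduces to the identity $(\iota_h\pi_h\iota_f\pi_f)^* = \pi_f^*\iota_f^*\pi_h^*\iota_h^*$. The subtlety is that $\pi_h\iota_f$ is itself neither injective nor surjective in general, so one must re-factor it, $\pi_h\iota_f = \iota'\pi'$, and then prove an \emph{exchange relation} $\iota_f^*\,\pi_h^* = \pi'^*\,\iota'^*$ (equivalently, that the surjective and injective functors are compatible across such a square). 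I would prove this exchange relation directly by unwinding the explicit formulas: on objects both sides send $u\in U$ to an explicit twisted complex built out of the $f$-intervals inside the fiber over the part of $u$'s fiber landing in the image, and on the degree-one generators both composites compute the same projection–generator–inclusion composite, essentially because an $(\iota')$-interval inside a $\pi'$-fiber corresponds bijectively to a $\pi_h$-fiber inside an $\iota_f$-interval. Once this square-compatibility is in hand, associativity of the factorization and the separately-established functorialities of Propositions~\ref{prop inj funct} give $(h\circ f)^* = \pi_f^*\pi'^*\iota'^*\iota_h^* = \pi_f^*(\iota_f^*\pi_h^*)\iota_h^* = (\pi_f^*\iota_f^*)(\pi_h^*\iota_h^*) = f^*\circ h^*$, completing the proof; strict associativity of $A_\oo$-functor composition then upgrades this to a genuine functor to the strict category $\cA_\per^\str$.
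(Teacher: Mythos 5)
Your proposal is correct and follows essentially the same route as the paper: factor each cyclic map canonically through its image as a surjection followed by an injection, define $f^*$ as the composite of the functors from the two propositions (with the injective one passed through twisted complexes), and reduce functoriality to the exchange relation for an injection followed by a surjection. The only difference is organizational: the paper verifies that exchange relation by reducing to elementary cases with $|S|=|T|-1=|U|$ and checking them explicitly, whereas you propose a direct general verification via the interval/fiber combinatorics, which amounts to the same computation.
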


\begin{proof}
Given a cyclic map $f:S\to T$, consider the object $Q = f(S) \subset S^1$. We have a canonical cyclic surjection $f_\surj:S\to Q$ (given by $f$), 
and a canonical cyclic injection $f_\inj:Q\to T$ (given by the identity), along with  the evident equality $f=f_\inj \circ f_\surj$.

Let us define the  functor $f^*:\cC_{T, st} \to \cC_{S, st}$ to be the composition $f^* = f^*_\surj \circ f^*_\inj$
of our previously defined functors.  By our previous results, to see that this extends functorially, it suffices to show that if a cyclic map $f:S\to U$ happens to be a composition $f = g_\surj \circ h_\inj$ of an injection $h_\inj:S\to T$  followed by a surjection
 $g_\surj:T\to U$, then we have an equality
$$
\xymatrix{
f^* = f^*_\surj \circ f^*_\inj = h_\inj^* \circ g_\surj^*: \cC_{U, st} \ar[r] & \cC_{S, st}
}$$
Furthermore, it suffices to assume the sizes of the sets satisfy $|S| = |T| -1 = |U|$. It is convenient to consider two cases:
(1) the composition $f$ is an isomorphism; (2) the composition $f$ is not an isomorphism.

Case (1)  It suffices to consider $S=U = [s_0, \ldots, s_n]$, $T=[s_0, \ldots, s_n, t]$, with $h_\inj:S\to T$ the evident injection,
and $g_\surj:T\to U$ the  surjection such  that $g_\surj(t) = s_n$. (There is also the alternative case where $g_\surj(t) = s_0$, but it follows from a completely parallel argument.) 

We must check that $h_\inj^*\circ g_\surj^*:\cC_{S, st} \to \cC_{S, st}$ is the identity. On objects, this is clear, with the following observation: first forming the twisted complex $(s_n \to t)$ along the degree one generating morphism $ v_{n, t}\in \hom_{\cC_S}(s_n, t)$, then sending $t$ to the zero object results in the object $s_n$ again. 

On morphisms, it is also clear, with the following observation: by the definition of functors on twisted complexes,
  we have
$$
(h_\inj^*)_1(g_\surj^*)_1(v_n) = (h_\inj^*)_2(v_{t, 0}, v_{n, t}) = v_n \in \hom_{\cC_S}(s_n, s_0)
$$
for the degree one generating morphisms 
$$
v_n\in \hom_{\cC_S}(s_n, s_0)
\quad v_{n, t}\in \hom_{\cC_S}(s_n, t)
\quad v_{t, 0}\in \hom_{\cC_S}(t, s_0)
$$

Case (2) We will consider two subcases.

(a) The first subcase is particularly simple. Fix $1< i_0<n$, and consider $S= [s_0, \ldots, s_n]$, $T=[s_0, \ldots, s_n, t]$, $U= [s_0, \ldots, \hat s_{i_0}, \ldots, s_{n}, t]$, with $h_\inj:S\to T$ the evident injection,
and $g_\surj:T\to U$ the  surjection such  that $g_\surj(t) = t$, $g_\surj(s_j) = s_j$, for $j\not =  i_0$, and $g_\surj(s_{i_0}) = s_{i_0-1}$.
In this situation, the asserted identity is evident, since the injection and surjection do not interact with each other.

(b) Finally, it suffices to consider $S= [s_0, \ldots, s_n]$, $T=[s_0, \ldots, s_n, t]$, $U= [s_0, \ldots, s_{n-1}, t]$, with $h_\inj:S\to T$ the evident injection,
and $g_\surj:T\to U$ the  surjection such  that $g_\surj(s_n) = s_{n-1}$. (There is also the alternative case
where $S= [s_0, \ldots, s_n]$, $T=[s_0, \ldots, s_n, t]$, $U= [s_1, \ldots, s_n, t]$, with $h_\inj:S\to T$ the evident injection,
and $g_\surj:T\to U$ the  surjection such  that $g_\surj(s_0) = s_{1}$, but it follows from a completely parallel argument.)

We must check that $f^*_\surj \circ f_\inj^* = h_\inj^*\circ g_\surj^*:\cC_{U, st} \to \cC_{S, st}$ where $Q= f(S) = [s_0, \ldots, s_{n-1}]$
$f_\inj:Q\to U$ is the evident injection, and $f_\surj:S\to Q$ is the surjection such that $f(s_n) = s_{n-1}$.

On objects, this is immediate from the definitions: under both functors, the objects $s_0, \ldots, s_{n-2}$ are sent to themselves, $t$ is sent to $0$, and $s_{n-1}$ is sent to the  
 twisted complex $(s_{n-1} \to s_n)$ along the degree one generating morphism $ v_{n-1}\in \hom_{\cC_S}(s_{n-1}, s_n)$.

On morphisms, it is also clear, with the following observation: by the definition of functors on twisted complexes,
  we have
$$
f^*_2(v_{t, 0}, v_{n-1, t}) =  (h_\inj^*)_2((g_\surj^*)_1v_{t, 0}, (g_\surj^*)_1 v_{n-1, t} ) = v_n \in \hom_{\cC_S}((s_{n-1} \to s_n), s_0)
$$
for the degree one generating morphisms 
$$
 v_{n-1, t}\in \hom_{\cC_S}(s_{n-1}, t)
\quad v_{t, 0}\in \hom_{\cC_S}(t, s_0)
$$

\end{proof}


\section{Graded version}

In this section, we indicate how to lift our prior constructions to the traditional $\Z$-graded (rather than two-periodic) setting.

\begin{defn}
Let $\cA^{\str}$ denote the strict category with objects  $k$-linear  $A_\oo$-categories and morphisms 
$k$-linear   $A_\oo$-functors.
\end{defn}

\begin{remark}
There is the evident forgetful functor $\periodic:\cA^{\str}\to \cA^{\str}_\per$ where we tensor  up from $k$ to the graded $k$-algebra $k[u, u^{-1}]$ with $\deg u =2$.
\end{remark}

Next we introduce the natural structure needed to lift our prior constructions.
Let $\Sym^2(S^1)$ denote the second symmetric product of $S^1$. Recall that it is homeomorphic to the Moebius strip, in particular it is homotopy equivalent to a circle.

\begin{defn}
Let $\ul \Lambda_{gr}$ denote the category defined as follows.

Objects are pairs $(S, c)$  of a finite nonempty subset of the circle $S\subset S^1$ and a point $c\in \Sym^2(S^1)$.

Morphisms $(S, c)\to (S', c')$ are homotopy classes of pairs $(\varphi, \gamma)$ of a degree $1$ map $\varphi: S^1\to S^1$ such that $\varphi(S) \subset S'$ and a path $\gamma:[0,1]\to \Sym^2(S^1)$ such that $\gamma(0) = \varphi(c)$, $\gamma(1) = c'$.

Composition of morphisms $(S, c)\to (S', c') \to (S'', c'')$ is given by the homotopy class of the composition
$$
(\varphi', \gamma') \circ (\varphi, \gamma) = (\varphi'\circ \varphi, \gamma'\#_{\varphi'(c')} (\varphi'\circ \gamma))
$$ 
where we write $\#_{\varphi'(c')}$ for the concatenation of paths at $\varphi'(c') \in \Sym^2(S^1)$.
\end{defn}

\begin{remark}
Note that every morphism $(\varphi, \gamma):(S, c)\to (S', c')$ can be factored 
$$
\xymatrix{
(\varphi, \gamma) :(S, c) \ar[rr]^-{(\varphi, \gamma_{\mathit{const}})} &&(\varphi(S) =S', \varphi(c))  \ar[rr]^-{  (\id_{S^1}, \gamma) } && (S', c')
}
$$
where $\gamma_{\mathit{const}}$ denotes the constant path.
\end{remark}

\begin{remark}
There is the full and essentially surjective forgetful functor $\forget:\ul\Lambda_{gr} \to \ul\Lambda$ where we forget the points $c\in \Sym^2(S^1)$ and the paths 
 $\gamma:[0,1]\to \Sym^2(S^1)$.
 \end{remark}

Next we present a graded version of Definition~\ref{defn per cat}.

Let $S = [s_0, s_1, \ldots, s_n] \subset S^1$ be a  finite cyclically ordered nonempty set. 
As usual, we will understand the indices modulo $n+1$
so that $s_{i+n+1}$ also denotes $s_i$.

Given a point $c\in \Sym^2(S^1)$, we can assign a weight $\alpha_i \in\{0, 1, 2\}$, for all $i =0, \ldots, n$,
by taking the multiplicity of $c$ in the closed-open interval $[s_i, s_{i+1})$. Note in particular the total weight satisfies $\sum_{i=0}^{n+1} \alpha_i =2$.

\begin{defn}
Let $S = [s_0, s_1, \ldots, s_n] \subset S^1$ be a  finite cyclically ordered nonempty set. 

Let $c\in \Sym^2(S^1)$ be a point of the second symmetric power of $S^1$.

We define the unital $k$-linear $A_\oo$-category  $\cC_{S,c, gr}$ as follows.

(1) Objects: $\Ob \cC_{S, c, gr}= S$.

(2) Morphisms: the free $k$-modules generated by  the identity morphisms and  
the  additional  morphisms
$$
\xymatrix{
 v_{i} \in \hom_{\cC_S}(s_i, s_{i+1}),
&
|v_{i} |= 1 -\alpha_i,
&
\mbox{for $i =0, \ldots, n$.}
}
$$

(3)  Compositions: all are  zero except for the $k$-linear maps given by the  identity compositions
and the additional compositions
$$
\xymatrix{
\mu_{n+1}( v_{i+ n}, \ldots, v_{i}) = \id_{s_i},
&
\mbox{for $i =0, \ldots, n$.}
}
$$

We define the unital $k$-linear $A_\oo$-category  $\cC_{S, c, st, gr}$ to be the $A_\oo$-category of twisted complexes of $\cC_{S, c, gr}$.
\end{defn}

We have the following graded version of Proposition~\ref{prop quiver}. Let $\cQ_{n, gr}$ denote the  $k$-linear triangulated differential graded derived category of finite-dimensional modules over the $A_n$-quiver
$$
\xymatrix{
\underset{1}{\bullet} \ar@<+.1cm>[r] & \underset{2}{\bullet} \ar@<+.1cm>[r] & \cdots \ar@<+.1cm>[r] & \underset{n}{\bullet}
}
$$ 
For $i=1, \ldots, n$, denote by $m_i \in \cQ_n$  the free rank one module supported at the $i$th vertex and zero elsewhere.
Note that among these objects,  there are degree one generating morphisms $w_i:m_i \to m_{i+1}$, for $i=1, \ldots, n-1$, and no other linearly independent non-identity morphisms.

 The proof of the following is immediate given that of  Proposition~\ref{prop quiver}.

\begin{prop}\label{prop quiver}
Let $S= [s_0, s_1, \ldots, s_n]\subset S^1$ be a  finite cyclically ordered nonempty set.

Let $c\in \Sym^2(S^1)$ be a point of the second symmetric power of $S^1$.

Fix the element $s_0\in S$ so that the remaining elements inherit the linear order $s_1, \ldots, s_n$.

Then there is a quasi-equivalence of $A_\oo$-categories
$$
\xymatrix{
F:\cQ_{n, gr} \ar[r]^-\sim & \cC_{S, c, st, gr}
}
$$
such that $F(m_i) = s_i[\eta_i]$, for $i=1, \ldots, n$, and $F(w_i) = v_i$, for $i=1, \ldots, n-1$,
 where the shift $\eta_i \in\{0, 1, 2\}$ is the multiplicity of $c$ in the closed-open interval $[s_0, s_i)$.
 
%
\end{prop}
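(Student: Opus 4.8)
The plan is to follow the proof of Proposition~\ref{prop quiver} essentially verbatim, the only new ingredient being the bookkeeping of the shifts $\eta_i$; nothing conceptual is added.

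First I would check that $F$ is a well-defined $A_\oo$-functor onto the full $A_\oo$-subcategory of $\cC_{S, c, gr}$ spanned by $s_1[\eta_1], \ldots, s_n[\eta_n]$. The single point not already present in the two-periodic case is the degree count: in $\cC_{S, c, gr}$ the generating morphism $v_i$ has degree $1-\alpha_i$, and because the shifts satisfy $\eta_{i+1}-\eta_i = \alpha_i$ (the multiplicity of $c$ in $[s_i, s_{i+1})$), the morphism $v_i$ becomes a degree-one morphism $s_i[\eta_i]\to s_{i+1}[\eta_{i+1}]$, matching the degree of $w_i\in\hom_{\cQ_{n, gr}}(m_i, m_{i+1})$. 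Exactly as before, there are no non-identity compositions among $s_1, \ldots, s_n$ in $\cC_{S, c, gr}$ (the only nonzero higher product $\mu_{n+1}$ involves $v_0$ or $v_n$, hence the object $s_0$), and none among $m_1, \ldots, m_n$ in $\cQ_{n,gr}$ (the path algebra of the $A_n$-quiver is directed and hereditary, so $\hom_{\cQ_{n,gr}}(m_i, m_j)$ is $k\cdot\id$ for $j=i$, one-dimensional in degree $1$ for $j=i+1$, and zero otherwise); hence $F$ restricts to a strict isomorphism of $A_\oo$-categories on these full subcategories.

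Next I would extend $F$ to twisted complexes. Since $\cQ_{n, gr}$ is quasi-equivalent to the twisted complexes on the full subcategory spanned by $m_1, \ldots, m_n$ (these simples generate $\cQ_{n,gr}$ and carry the formal $A_\oo$-structure just described), and since $F$ identifies that subcategory strictly with the one spanned by $s_1[\eta_1], \ldots, s_n[\eta_n]$, the functor $F$ extends uniquely to an $A_\oo$-functor $F:\cQ_{n, gr}\to\cC_{S, c, st, gr}$ which is a quasi-equivalence onto the full subcategory of twisted complexes supported on $s_1[\eta_1], \ldots, s_n[\eta_n]$. It then remains only to see that $s_0$ lies in this full subcategory up to isomorphism, for $\cC_{S, c, st, gr}$ is generated by $s_0, \ldots, s_n$. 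I would form, as in Proposition~\ref{prop quiver}, the twisted complex
$$
\xymatrix{
s_0' = (s_1[\eta_1]\ar[r]^-{v_1} & s_2[\eta_2]\ar[r]^-{v_2} & \cdots \ar[r]^-{v_{n-1}} & s_n[\eta_n])
}
$$
whose generalized Maurer--Cartan equation holds because each $v_i$ now has degree $1$ and all products among $v_1, \ldots, v_{n-1}$ vanish (only $\mu_{n+1}$, which consumes all $n+1$ generators, is nonzero). The morphisms $v_0\in\hom_{\cC_S}(s_0, s_1)$ and $v_n\in\hom_{\cC_S}(s_n, s_0)$, now of degrees $1-\alpha_0$ and $1-\alpha_n$, induce degree-zero morphisms between $s_0$ and appropriate shifts of $s_0'$; expanding either composite via the twisted-complex composition formula leaves only the single surviving term $\mu_{n+1}(v_{i+n}, \ldots, v_i) = \id_{s_i}$, so each composite is the relevant identity and the two morphisms are mutually inverse isomorphisms. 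Thus $s_0$ is isomorphic to a shift of $s_0'$, $F$ is essentially surjective, and hence a quasi-equivalence.

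The only part requiring genuine care is the shift bookkeeping in this last step: one must verify that the $\eta_i$ assemble into a consistent twisted complex $s_0'$ (that each $v_i$ truly lands in degree one after shifting, which is the relation $\eta_{i+1}-\eta_i = \alpha_i$) and that, after an overall shift, the two ``going-around'' morphisms $v_0$ and $v_n$ land in degree zero (which uses $\sum_{i=0}^n\alpha_i = 2$). A convenient internal check is the degree count for the defining product: $\mu_{n+1}$ has degree $1-n$ while $\sum_{i=0}^n|v_i| = (n+1)-2 = n-1$, so $\mu_{n+1}(v_{i+n}, \ldots, v_i)$ has degree $0 = |\id_{s_i}|$ — which is precisely why the two-periodic factor $u$ of degree $2$ disappears in the $\Z$-graded setting. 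Apart from this, the argument is word for word that of Proposition~\ref{prop quiver}.
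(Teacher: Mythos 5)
Your proposal is correct and follows the same route the paper takes: the paper declares the graded statement immediate from the two-periodic Proposition, whose proof is exactly your argument (strict identification on the full subcategory spanned by $s_1,\ldots,s_n$, then essential surjectivity via the twisted complex $s_0'$ and the morphisms $v_0, v_n$). Your added bookkeeping — $\eta_{i+1}-\eta_i=\alpha_i$ and the total weight $\sum_i\alpha_i=2$ replacing the two-periodic shift — is precisely the check the paper leaves implicit.
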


Now we have arrived at the graded version of Theorem~\ref{main thm}. The proof is largely a routine check of gradings given our prior constructions.

\begin{thm}\label{main thm gr}
The assignment of the $A_\oo$-category $\cC_{S,c, st, gr}$ to a pair of a finite cyclically ordered nonempty set $S \subset S^1$
and a point $c\in \Sym^2(S^1)$ naturally extends to a functor $\cC_{\mathit{st}, \mathit{gr}}:\ul\Lambda^{op}_{gr}\to \cA^\str$
fitting into a commutative diagram 
$$
\xymatrix{
\ar[d]_-\forget \ul \Lambda^{op}_{gr}\ar[r]^-{\cC_{\mathit{st}, \mathit{gr}}} &  \cA^\str \ar[d]^-\periodic \\
\ul \Lambda^{op}\ar[r]^-{\cC_{\mathit{st}}} &  \cA_\per^\str
}
$$
\end{thm}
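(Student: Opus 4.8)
The plan is to bootstrap the graded statement from the already-proven two-periodic Theorem~\ref{main thm} by mechanically tracking gradings, splitting the construction (as in the two-periodic case) into the injective and surjective cyclic subcategories and then pasting. First I would upgrade the two intermediate propositions: the graded analogue of Proposition~\ref{prop inj funct} (quotient by objects not in the image) and of Proposition~\ref{prop inj funct} for surjections (the twisted-complex construction). Because the forgetful functor $\forget:\ul\Lambda_{gr}\to\ul\Lambda$ is full and the underlying $A_\oo$-category maps are literally the same formulas as before (projections and inclusions of twisted complexes, adjoining a zero object), the only new content is: (i) choosing, for each morphism $(\varphi,\gamma)$, how the path $\gamma$ in $\Sym^2(S^1)$ determines the integer shifts $\eta_i$ appearing in Proposition~\ref{prop quiver}; and (ii) checking that with these shifts the structure maps $f^*_k$ have the correct degree $1-k$ and the $A_\oo$-functor equations hold on the nose.

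The key observation making (i) canonical: a point $c\in\Sym^2(S^1)$ together with a chosen basepoint $s\in S$ determines the weight vector $(\alpha_i)$, hence the shift $\eta_i=$ multiplicity of $c$ in $[s,s_i)$; a path $\gamma$ in $\Sym^2(S^1)\simeq S^1$ records exactly the winding needed to transport these shifts from $(S,c)$ to $(S',c')$, i.e.\ $\gamma$ supplies the integer ambiguity by which $\eta_i$ is only well-defined up to adding a multiple of the total weight $2$. So I would define $f^*$ on objects by the same twisted complexes as in the two-periodic proof but with each generating morphism $v_j$ now carrying degree $1-\alpha_j$, and I would read off the shift placed on each object from $\gamma$. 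The concatenation rule for composition, $\gamma'\#_{\varphi'(c')}(\varphi'\circ\gamma)$, is precisely what makes these shifts add correctly under composition, so functoriality of shifts is built into the definition of $\ul\Lambda_{gr}$.

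Then I would redo the gluing argument of Theorem~\ref{main thm} verbatim: factor a morphism $(\varphi,\gamma)$ through $(\varphi,\gamma_{\mathit{const}})$ followed by $(\id,\gamma)$ using the remark after the definition of $\ul\Lambda_{gr}$, reducing to the injective-then-surjective and surjective-then-injective cases; the commutation check in the two small cases $|S|=|T|-1=|U|$ is identical to the two-periodic one except that one must verify the two sides agree as \emph{graded} morphisms. Here the relevant equalities $f^*_2(v_{t,0},v_{n-1,t})=v_n$ etc.\ hold because $|v_{t,0}|+|v_{n-1,t}|$ differs from $|v_n|$ by exactly the weight $\alpha_t$ of the interval containing $t$, which is what the degree-$(1-k)$ normalization of $f^*_2$ absorbs; concretely $|v_{t,0}|+|v_{n-1,t}| = (1-\alpha_{t,0})+(1-\alpha_{n-1,t})$ and $\alpha_{t,0}+\alpha_{n-1,t}=\alpha_n$ in $U$, so the shifts match. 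Finally the commuting square with $\periodic$ is immediate: setting $u$ to act by $1$ collapses $\Z$-degrees mod $2$ and all $\alpha_i$-shifts become the two-periodic identity insertions $u\,\id$, recovering Theorem~\ref{main thm} exactly.

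I expect the main obstacle to be purely bookkeeping: pinning down the precise integer shift $\eta_i$ assigned to each object of the source twisted complexes so that (a) it is well-defined on homotopy classes of paths $\gamma$, (b) it is compatible with the factorization through $\gamma_{\mathit{const}}$, and (c) it makes \emph{both} the injective and surjective functors degree-preserving simultaneously, so that their composites in the two gluing cases agree without an extra shift. Once the dictionary ``path in $\Sym^2(S^1)$ $\leftrightarrow$ choice of integer lifts of the weights'' is set up correctly, every equation in the two-periodic proofs transports with its degree automatically accounted for, and no genuinely new $A_\oo$-identity needs to be verified.
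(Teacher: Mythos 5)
Your proposal is correct and follows essentially the same route as the paper: factor each morphism of $\ul\Lambda_{gr}$ through a constant-path geometric map followed by $(\id_{S^1},\gamma)$, handle the path part by shift functors read off from the crossing/winding data of $\gamma$ in $\Sym^2(S^1)$ (the paper's weights $\beta_i$), and observe that the two-periodic injective/surjective/gluing constructions lift once the generators carry degrees $1-\alpha_i$. The paper's own proof is in fact terser, leaving exactly the graded bookkeeping you spell out (including the twisted-complex shift that reconciles the degrees in case (2)(b)) as a reader's check.
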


\begin{proof}
Recall that every morphism $(\varphi, \gamma):(S, c)\to (S', c')$ can be factored 
$$
\xymatrix{
(\varphi, \gamma) :(S, c) \ar[rr]^-{(\varphi, \gamma_{\mathit{const}})} &&(\varphi(S) =S', \varphi(c))  \ar[rr]^-{  (\id_{S^1}, \gamma) } && (S', c')
}
$$
where $\gamma_{\mathit{const}}$ denotes the constant path.

Let us explain the functoriality of the assignment $\cC_{S,c, st, gr}$ under morphisms of the form  
$$
\xymatrix{
\gamma= (\id_{S^1}, \gamma) :(S, c) \ar[r] & (S, c')
}
$$ 
We will then leave the remaining compatibility check to the reader.

Given a path $\gamma:[0,1] \to \Sym^2(S^1)$, we can assign a weight $\beta_i \in\Z$, for all $i =0, \ldots, n$,
by the signed count of the number of times $\gamma(t)$ crosses $s_i \in S$ from the closed-open interval $[s_{i-1}, s_{i})$ to the   open-closed interval $[s_{i}, s_{i+1})$.

On objects, we define the induced functor to be given by shifts 
$$
\xymatrix{
\gamma^*:\cC_{S,c, st, gr}\to \cC_{S,c', st, gr} & \gamma^*s_i = s_i[-\beta_i]
}
$$
On morphisms and compositions, we set it to be the identity.

We leave the  compatibility check of our prior constructions to the reader.
\end{proof}

\begin{remark}
The  compatibility check  can be made particularly easy by the following observation: with the construction of $\gamma^*$ in hand, we can move any $c\in \Sym^2(S^1)$ to consist of two distinct points, and also move any $c\in \Sym^2(S^1)$ to arrange that a given morphism $(\varphi, \gamma):(S, c)\to (S', c')$ has the property that $\varphi$ is injective on $c$.
\end{remark}


\end{document}